\newtheorem{theo}{Theorem}[section]
\newtheorem{lemm}[theo]{Lemma}
\newtheorem{rema}[theo]{Remark}
\newtheorem{defi}[theo]{Definition}
\numberwithin{equation}{section}
\begin{document}

\title[Limiting absorption principle of Helmholtz equation under periodic structure]{Limiting absorption principle  of Helmholtz equation with sign changing coefficients under periodic structure}

\author{Wenjing Zhang}
\address{School of Mathematics and Statistics, and Center for Mathematics
and Interdisciplinary Sciences, Northeast Normal University, Changchun,
Jilin 130024, China}
\email{zhangwj034@nenu.edu.cn}

\author{Yu Chen}
\address{Department of Mathematics, Northeastern University, Shenyang, Liaoning 110004, China }
\email{chenyu@amss.ac.cn}

\author{Yixian Gao}
\address{School of Mathematics and Statistics, Center for Mathematics and Interdisciplinary Sciences, Northeast Normal University, Changchun, Jilin 130024, China}
\email{gaoyx643@nenu.edu.cn}

\thanks{The research of YG was supported by NSFC grant 12371187  and Science and Technology Development Plan Project of Jilin Province 20240101006JJ. The research of YC was partially supported by NSFC grant 12371117, the FRF for the Central Universities N2305007 and NSF of Liaoning Province ZX20240488.}

\subjclass[2010]{35J15, 35B34, 35B35}

\keywords{Helmholtz euqation, negative index materials, limiting absorption principle, periodic structure}

\begin{abstract}
Negative refractive index materials have attracted significant research attention due to their unique electromagnetic response characteristics.
In this paper, we employ the  complementing boundary condition to establish rigorous a priori estimates for the  Helmholtz equation,   from which the limiting absorption principle is analytically derived. Within this mathematical framework, we conclusively establish the well-posedness of the electromagnetic  transmission problem at the interface between  conventional materials and negative refractive index materials in two-dimensional periodic structures.
\end{abstract}

\maketitle

\section{introduction}
Negative index materials (NIMs) are a type of artificially synthesized metamaterial that exhibits exceptional electromagnetic properties,
 distinct from those of conventional materials.  These include simultaneous negative electric permittivity and magnetic permeability, as well as negative refraction characteristics.
  In 1968, Veselago investigated the theoretical aspects of materials possessing both negative permittivity and permeability \cite{Veselago1968The}. Subsequent advances in the 1990s revealed that periodically arranged thin metallic wires demonstrate negative permittivity at specific frequencies \cite{pendry1996extremely}, while metallic split-ring resonators with periodic configurations exhibit negative permeability \cite{WOS:000083406900003}.
  A landmark achievement occurred in 2001 when Shelby et al., building on Pendry's foundational work, experimentally observed negative refraction phenomena \cite{WOS:000167995200039}, a discovery that sparked broad scientific interest. Recent developments include Sharma et al.'s investigation of ultrathin metamaterials capable of simultaneous negative permittivity and permeability within defined frequency ranges, highlighting its potential for controlling electromagnetic waves \cite{WOS:001222725500002}. Furthermore, Berrington et al. explored how to achieve negative refractive
index in dielectric crystals containing rare-earth ions. This research implies that we can now explore
negative refractive index effects at optical wavelengths in three-dimensional natural media \cite{WOS:001159239300001}.

Banerjee and Nehmetallah formulated  partial differential equations for wave propagation in NIMs based on dispersion relations and verified the linear characteristics of wave propagation within these negative refractive index media \cite{MR2445138}. Aylo systematically investigated the fundamental properties of electromagnetic wave propagation in NIMs, along with various tools for understanding and characterizing these materials\cite{aylo2010wave}. Their research has deepened our understanding of NIMs and enriched the theoretical foundation for their practical applications. NIMs have a wide range of practical applications.
For example, NIM-based lenses achieve subwavelength-resolution imaging, offering transformative potential in medical diagnostics  \cite{WOS:000796891400003,MR3458160,WOS:001180030400005}.
The negative refractive index property of NIMs enables the fabrication of superlenses that reduce the size and weight of optical components, thereby improving system performance \cite{PhysRevLett.85.3966,MR3871415}. Moreover, NIMs enable controlled electromagnetic wavefront manipulation, laying the foundation for broadband cloaking devices \cite{ammari2013spectral,MR4128423,MR4405196,MR3947973,MR3990650}. Additionally, NIMs have applications in antennas \cite{WOS:001063223800029}. Pandey et al. investigated the wave propagation characteristics of one-dimensional photonic crystals in both TE and TM modes, providing design guidelines for their applications in areas such as antennas \cite{WOS:000970513800038}.

 The simultaneous  presence of negative permittivity and permeability in NIMs introduces sign reversals in the coefficients of governing equations when analyzing interactions between traditional materials and NIMs. These sign changes invalidate classical methods for solving elliptic equations, thereby necessitating  the development of novel mathematical approaches such as integral equation techniques, variational formulations, and the $T$-coercivity method. Costabel and Stephan  established the  well-posedness of the Helmholtz equation with sign-changing coefficients by applying  the Fredholm framework under the critical ratio condition   ${\varepsilon^{(1)}}/{\varepsilon^{(2)}}\neq-1$, where $\varepsilon^{(1)}$ and $\varepsilon^{(2)}$  denote the dielectric permittivity  of regions $D$ and $\mathbb{R}^n\setminus\overline{D}(n\geqslant2)$, respectively\cite{MR782799}. Bonnet-Ben Dhia, Ciarlet, and Zw\"{o}lf demonstrated the well-posedness of the Helmholtz equation with sign-changing coefficients via variational methods\cite{10.1016/j.cam.2006.01.046}, later extending their framework to the  three-dimensional Maxwell systems\cite{MR2446403}. The $T$-coercivity method introduced by Bonnet-Ben Dhia et al. in \cite{WOS:000278570900038}, provides an alternative framework for addressing the well-posedness of Helmholtz equations with sign-varying coefficients \cite{MR3062923,dhia2012t,MR4277850,MR3534861}, offering improved computational efficiency.   Furthermore, Nguyen established the well-posedness of such equations via a priori estimates derived from the Cauchy problem \cite{MR3515306}. In a recent advancement, Hu and Kirsch employed the limiting absorption principle to impose constraint conditions on scattering problem solutions, thereby proving the well-posedness of scattering phenomena associated with periodic curves \cite{hu2024directinversetimeharmonicscattering}.

This work establishes the well-posedness  of the  Helmholtz equation involving NIMs and traditional materials in  periodic structures.
The analysis faces two principal challenges. First, conventional approaches for elliptic equations become inapplicable due to the sign reversal of the coefficients.
Second, uniqueness of solutions for the Helmholtz equation is typically ensured by imposing conditions at infinity; the Sommerfeld radiation condition  serves as a natural choice.
However, in the periodic structures studied here, this condition is no longer applicable.
 Faced with these challenges and inspired by \cite{MR3515306}, we employed complementing boundary conditions to conduct a priori estimates for the reduced system \eqref{c1}.
 Unlike the approach in \cite{MR3515306}, we introduced a transparent boundary condition to convert the original problem on an unbounded domain into one defined on a bounded domain. Within this bounded domain, we further investigated the well-posedness of  the system \eqref{c2} with sign-changing coefficients, thereby simplifying the computational process.  Additionally, due to the sign reversal of coefficients in the Helmholtz equation defined on the periodic structure of system \eqref{c2}, we constructed mappings to transform this system into an elliptic system on the half-plane   $\{x\in\mathbb{R}^2:\:x_2\geqslant 0\}$, subject to complementing boundary conditions.
  Subsequently,  by utilizing the definition and properties of the difference operator $D_1^h$ (where  $h$ is a sufficiently small positive constant), we derived a boundary $H^2$-regularity result.
  By combining these results  with the  complementing boundary conditions for a priori estimates of the Cauchy problem, we proved the validity of the limiting absorption principle. The limiting absorption principle lies in introducing an infinitesimal absorption term to seek the unique solution of the equation, making this method particularly suitable for solving complex problems involving NIMs. Consequently,  we proved the existence and uniqueness of the solution to the Helmholtz equation in \eqref{c2} via the limiting absorption principle. Furthermore, based on the uniqueness of the limit, we also derived the stability estimates. These results collectively demonstrate the well-posedness of the Helmholtz equation with sign-changing coefficients on periodic structures.

The organization of this paper is as follows: In Section \ref{2}, we delve into the mathematical model underlying the problem, introduce  necessary notations used throughout the paper, and present the main  theorem of this paper. Section \ref{cbc} explains the definition of the complementing boundary condition, illustrates its application to the considered boundary value problem, and presents the $H^2$ regularity result. Section \ref{3} introduces key lemmas involved in the proof. The proof of the main theorem is discussed in detail in Section \ref{4}.

 %-----------------------------------------------------------------------------------------------------------------------------------
\section{problem formulation}\label{2}
Since the structure and
medium are assumed to be periodic in the $x_1$-direction, there exists a constant $\Lambda>0$ such that
the profile of grating $S$  in one period can be represented as
\begin{align*}
S:=\{x\in\mathbb{R}^2:0<x_1<\Lambda,\:x_2=f\left(x_1\right)\},
\end{align*}
where $f\in C^{2}(\mathbb R)$ satisfies
\[
f\left(x_1+n\Lambda\right)=f\left(x_1\right) \quad  \text {for all}~ n\in\mathbb{Z},\]
and $f$ is additionally assumed to be non-constant.

Assume  the region above the grating is occupied by a homogeneous medium,
 where both the electric permittivity and magnetic permeability are positive constants, i.e.,
\begin{align*}
\varepsilon_1>0,\quad\mu_1>0.
\end{align*}
Below the grating is a negative-index medium characterized by
\begin{align*}
\varepsilon_2<0,\quad\mu_2<0,
\end{align*}
where both material parameters are negative.
Since the Helmholtz equation  becomes ill-posed in $H^1$ when the contrast ratio  $\varepsilon_1/\varepsilon_2=-1$ ( due to the change of sign in the coefficient \cite{MR3200087}), we assume $\varepsilon_1/\varepsilon_2\neq -1$  throughout this paper.

Denote the problem geometry by the following regions:
\[
\begin{aligned}
\tilde{\Omega}_1 &:= \left\{ x \in \mathbb{R}^2 : 0 < x_1 < \Lambda,\ x_2 > f(x_1) \right\}, \\
\Omega_2        &:= \left\{ x \in \mathbb{R}^2 : 0 < x_1 < \Lambda,\ 0 < x_2 < f(x_1) \right\}, \\
S_0             &:= \left\{ x \in \mathbb{R}^2 : x_1 = 0,\ x_2 > 0 \right\}, \\
S_\Lambda       &:= \left\{ x \in \mathbb{R}^2 : x_1 = \Lambda,\ x_2 > 0 \right\}, \\
\Gamma          &:= \left\{ x \in \mathbb{R}^2 : 0 < x_1 < \Lambda,\ x_2 = 0 \right\},
\end{aligned}
\]
and let $\tilde{\Omega} := \tilde{\Omega}_1 \cup \Omega_2 \cup S$. The geometry is illustrated in Figure \ref{fig1}.

Throughout this paper, the symbol $C$ denotes a generic constant whose value may vary depending on the context, and $\nu$ represents the unit normal vector.
\begin{figure}[h]
    \centering
    \includegraphics[width=0.45\textwidth]{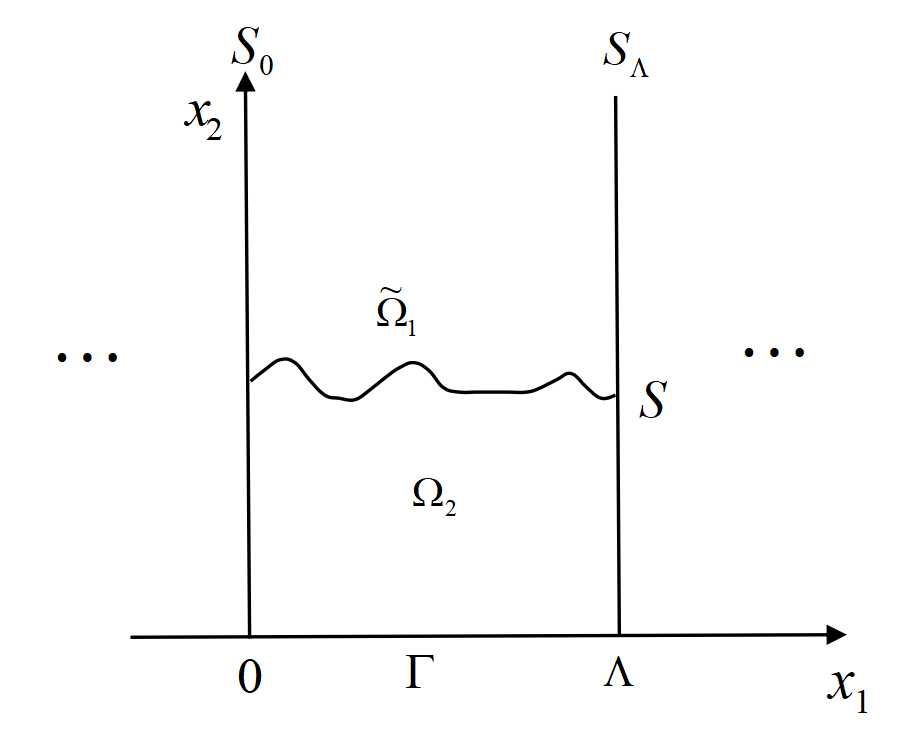}
    \caption{ Problem geometry}
    \label{fig1}
\end{figure}

Consider an incident plane wave \( u^i \) impinging on the structure from above, expressed explicitly as
\begin{align*}
u^i(x)=e^{{\mathrm i}\left(\alpha x_1-\beta x_2\right)},\quad x=(x_1, x_2)\in\mathbb{R}^2,
\end{align*}
where the parameters are defined as
\[\alpha=\kappa_1 \sin\theta, \quad  \beta=\kappa_1 \cos\theta, \]
 with $\theta\in\left(-\pi/2,\pi/2\right)$ denoting the incident angle, $\omega>0$  the angular frequency, and  $\kappa_1=\omega\sqrt{\varepsilon_1\mu_1}$  the wave number in the medium above the grating.  The boundary $\Gamma$ is modeled as a perfect electric conductor, implying a homogeneous Neumann boundary condition on $\Gamma$.

We study the well-posedness of the following boundary value problem:
\begin{align}\label{ab}
\begin{cases}
\nabla\cdot\left(\varepsilon_0^{-1}\nabla u_0\right)+\omega^2\mu u_0=0&\quad \text{in}\ \tilde{\Omega},\\
\partial_{x_2}u_0=0&\quad \text{on}\ \Gamma,
\end{cases}
\end{align}
where the piecewise-constant coefficients are defined as
\begin{align*}
\varepsilon_0:=\begin{cases}
\varepsilon_1&\quad \text{in}\ \tilde{\Omega}_1,\\
\varepsilon_2&\quad \text{in}\ \Omega_2,
\end{cases}\quad
\mu:=\begin{cases}
\mu_1&\quad \text{in}\ \tilde{\Omega}_1,\\
\mu_2&\quad \text{in}\ \Omega_2,
\end{cases}
\end{align*}
and  $\kappa_2^2:=\omega^2\varepsilon_2\mu_2$. The following transmission conditions are imposed on the  grating profile $S$:
\begin{align*}
u_0|_{\tilde{\Omega}_1}=u_0|_{\Omega_2},\quad\varepsilon_1^{-1}\nabla u_0|_{\tilde{\Omega}_1}\cdot\nu=\varepsilon_2^{-1}\nabla u_0|_{\Omega_2}\cdot\nu.
\end{align*}

To demonstrate the  existence of the boundary value  problem \eqref{ab},
we employ the limiting absorption principle by analyzing the  following  problem
\begin{align}\label{1a}
\begin{cases}
\nabla\cdot\left(\varepsilon_\sigma^{-1}\nabla u_\sigma\right)+\omega^2\mu u_\sigma+{\rm i}\sigma u_\sigma=0&\quad \text{in}\ \tilde{\Omega},\\
\partial_{x_2}u_\sigma=0&\quad \text{on}\ \Gamma,
\end{cases}
\end{align}
where $\sigma$ is a piecewise function defined as
\begin{align*}
\sigma=\sigma(x)=\begin{cases}
0\quad&\text{in}\ \tilde{\Omega}_1,\\
\sigma>0\quad&\text{in}\ \Omega_2.
\end{cases}
\end{align*}
The modified permittivity $\varepsilon_\sigma$ is given by
\begin{align*}
\varepsilon_\sigma:=\begin{cases}
\varepsilon_1&\quad \text{in}\ \tilde{\Omega}_1,\\
\varepsilon_2+\dfrac{{\rm i}\sigma}{\omega}&\quad \text{in}\ \Omega_2.
\end{cases}
\end{align*}
Transmission conditions on the grating profile $S$ are imposed as
\begin{align*}
u_\sigma \big|_{\tilde{\Omega}_1}=u_\sigma \big|_{\Omega_2},\quad\varepsilon_1^{-1}\nabla u_\sigma \big|_{\tilde{\Omega}_1}\cdot\nu=\left(\varepsilon_2+\dfrac{{\rm i}\sigma}{\omega}\right)^{-1}\nabla u_\sigma \big|_{\Omega_2}\cdot\nu.
\end{align*}

Since the solutions $u_0$ and $u_\sigma$ of the boundary value problems \eqref{ab} and \eqref{1a} are quasi-periodic functions, they inherit the quasi-periodicity on the vertical boundaries $S_0$ and $S_\Lambda$.  Additionally, these solutions satisfy  the bounded outgoing condition as $ x_2 \to \infty $.  The total field $u_\sigma$ above the grating can be decomposed into the incident field $ u^i$ and the diffracted field $ u^d $, i.e.,
\[
u_\sigma=u^d+u^i,\quad x_2>f(x_1).
\]

In order to investigate the above problem, we apply a transparent boundary condition ( see  \cite[Chapter 3]{bao2022maxwell}) to transform the unbounded domain into a bounded region.
  For problem  \eqref{1a}, the quasi-periodicity of  the diffracted field $u^d$ allows the Fourier series expansion:
\begin{align*}
u^d(x_1,x_2)=\sum_{n\in\mathbb{Z}}u_n^d(x_2)e^{{\rm i}\alpha_nx_1},\quad\alpha_n=\alpha+n\dfrac{2\pi}{\Lambda}
\end{align*}
with   Fourier coefficients defined by
\begin{align*}
u_n^d(x_2)=\dfrac{1}{\Lambda}\int_0^\Lambda u^d(x_1,x_2)e^{-{\rm i}\alpha_nx_1} {\rm d} x_1.
\end{align*}
Within the domain  $\tilde{\Omega}_1$, the diffracted field  $u^d(x_1,x_2)$ satisfies the  time-harmonic Helmholtz equation
\begin{align*}
\Delta u^d+k_1^2u^d=0\quad \text{in}\ \tilde{\Omega}_1.
\end{align*}
The propagation constants are defined as
\begin{align*}
\beta_n:=\begin{cases}
\left(\kappa_1^2-\alpha_n^2\right)^{\frac{1}{2}},\quad&|\alpha_n|<\kappa_1,\\
{\rm i} \left(\alpha_n^2-\kappa_1^2\right)^{\frac{1}{2}},\quad&|\alpha_n|>\kappa_1.
\end{cases}
\end{align*}
Through direct calculation, we obtain the Rayleigh expansion of $u^d(x_1,x_2)$ as follows
\begin{align*}
u^d(x_1,x_2)=\sum\limits_{n\in\mathbb{Z}}u_n^d(h_1)e^{{\rm i}\left(\alpha_nx_1+\beta_n(x_2-h_1)\right)},\quad x_2>h_1,
\end{align*}
where   $h_1$ is a  constant satisfying   $h_1>\max\limits_{0<x_1<\Lambda}f(x_1)$ , and the modal coefficients are
\begin{align*}
u_n^d(h_1)=\dfrac{1}{\Lambda}\int_0^\Lambda u^d(x_1,h_1)e^{- {\rm i}\alpha_nx_1}{\rm d}x_1.
\end{align*}

For a quasi-periodic function $\omega (x_1)$ with the Fourier expansion  $\omega(x_1)=\sum\limits_{n\in\mathbb{Z}}\omega_n e^{\mathrm {i} \alpha_nx_1}$,  where the Fourier coefficients are
$
\omega_n=\dfrac{1}{\Lambda}\int_0^\Lambda\omega(x_1)e^{-{\rm i} \alpha_nx_1} {\rm d}x_1,
$
the Dirichlet-to-Neumann (DtN) operator $T$  is  defined  by
\begin{align}\label{a0}
\left(T\omega\right)(x_1):=\sum\limits_{n\in\mathbb{Z}}{\rm i} \beta_n\omega_ne^{{\rm i}\alpha_nx_1}.
\end{align}
Consequently, the diffracted field
$u^{d}$ satisfies the boundary condition
\begin{align*}
\partial_{x_2}u^d=Tu^d\quad \text{on}\ \Gamma_0,
\end{align*}
with  $\Gamma_0:=\{x\in\mathbb{R}^2:0<x_1<\Lambda,\:x_2=h_1\}$.

For the incident field  $u^i=e^{{\rm i}(\alpha x_1-\beta x_2)}$, the DtN operator \eqref{a0} yields
\begin{align*}
Tu^i=\sum_{n\in\mathbb{Z}} {\rm i}\beta_nu_n^i(h_1)e^{\mathrm {i} \alpha_n x_1}=\mathrm {i}\beta_0e^{\mathrm {i}(\alpha x_1-\beta h_1)}
=\mathrm {i}\beta e^{\mathrm {i}(\alpha x_1-\beta h_1)}.
\end{align*}
For the total field, direct calculation gives the boundary condition
\begin{align*}
\partial_{x_2}u_\sigma=Tu_\sigma+g\quad \text{on}\ \Gamma_0,
\end{align*}
where
\begin{align*}
g=-2{\rm i} \beta e^{{\rm i}(\alpha x_1-\beta h_1)}.
\end{align*}
Consequently, the reduced boundary value problem can be considered in a bounded domain
\begin{align}\label{c1}
\begin{cases}
\nabla\cdot\left(\varepsilon_\sigma^{-1}\nabla u_\sigma\right)+\omega^2\mu u_\sigma+{\rm i}\sigma u_\sigma=0&\quad \text{in}\ \Omega,\\
\partial_{x_2}u_\sigma=Tu_\sigma+g&\quad \text{on}\ \Gamma_0,\\
\partial_{x_2}u_\sigma=0&\quad \text{on}\ \Gamma,
\end{cases}
\end{align}
where $\Omega:=\Omega_1\cup{\Omega_2}\cup S$, and
$
\Omega_1:=\{x\in\mathbb{R}^2:0<x_1<\Lambda,\:f(x_1)<x_2<h_1\},
$
as illustrated in   Figure \ref{fig2}.

\begin{figure}[h]
    \centering
    \includegraphics[width=0.45\textwidth]{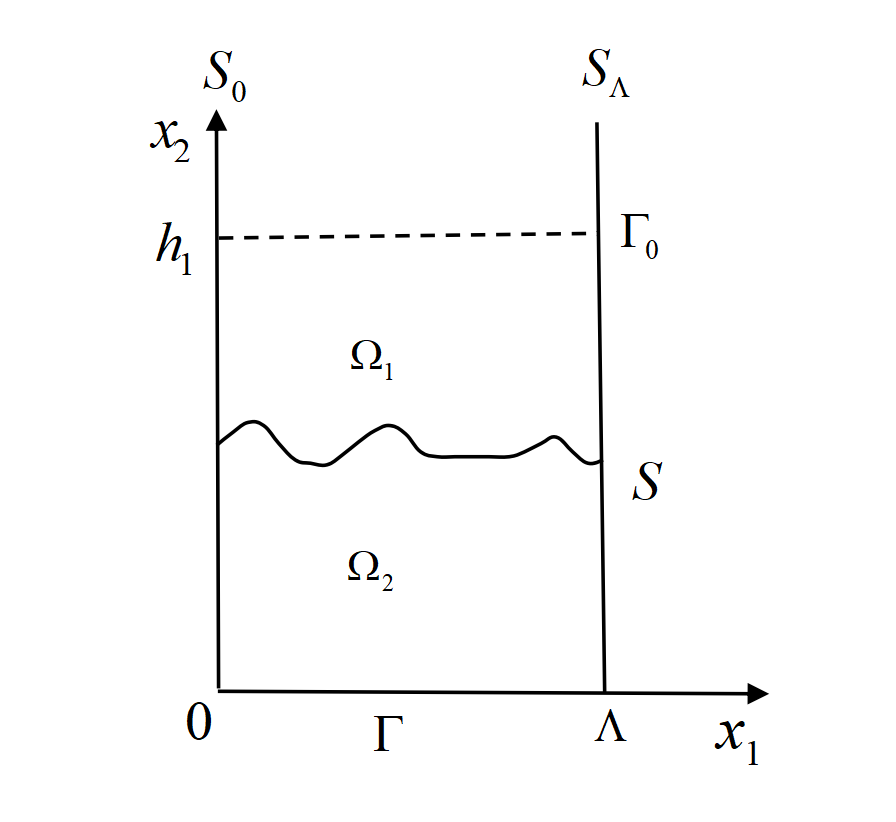}
    \caption{ Problem geometry with TBC}
    \label{fig2}
\end{figure}

Similarly, applying the transparent boundary condition to problem  \eqref{ab},
we derive the reduced boundary value problem:
\begin{align}\label{c2}
\begin{cases}
\nabla\cdot\left(\varepsilon_0^{-1}\nabla u_0\right)+\omega^2\mu u_0=0&\quad \text{in}\ \Omega,\\
\partial_{x_2}u_0=Tu_0+g&\quad \text{on}\ \Gamma_0,\\
\partial_{x_2}u_0=0&\quad \text{on}\ \Gamma.
\end{cases}
\end{align}

We begin by defining relevant function spaces. The quasi-periodic Lebesgue space is
\begin{align*}
L^2_{qp}(\Omega):=\{u\in L^2(\Omega): u\left(0,x_2\right)=u\left(\Lambda,x_2\right)e^{-{\rm i}\alpha\Lambda}\}.
\end{align*}
 The standard Sobolev space is given by
  \[H^1(\Omega):=\{u\in L^2(\Omega):\nabla u\in L^2(\Omega)^2\},\]
with its quasi-periodic subspace defined as
\begin{align*}
H_{qp}^1\left(\Omega\right):=\{u\in H^1\left(\Omega\right): u\left(0,x_2\right)=u\left(\Lambda,x_2\right)e^{- {\rm i} \alpha\Lambda}\}.
\end{align*}
For $s\in\mathbb{R}$, the quasi-periodic trace space  $H^s_{qp}(\Gamma)$ is equipped with the norm
\begin{align*}
\|u\|^2_{H^s(\Gamma)}=\Lambda\sum\limits_{n\in\mathbb{Z}}\left(1+\alpha^2_n\right)^s|u_n|^2,
\end{align*}
where $u_n$ are the Fourier coefficients of the quasi-periodic function $u(x)$. The dual space $H^{-s}_{qp}(\Gamma)$ is defined with respect to the  $L^2_{qp}(\Gamma)$- inner product:
\begin{align*}
\langle u,v\rangle_{\Gamma}:=\int_{\Gamma}u\overline {v}{\rm d}x_1=\Lambda\sum\limits_{n\in\mathbb{Z}}u_n\overline {v}_n,
\end{align*}
where $\overline {v}$ represents the conjugation of $v$.

Due to the sign inversion in the equation's coefficient, coercivity fails when applying variational methods to establish solution existence and uniqueness.  To address this, inspired by \cite{WOS:000278570900038}, we define an operator to recover coercivity. Define a bijective operator $\mathbb{T}$: $H^1_{qp}(\Omega)\to H^1_{qp}(\Omega)$ as
\begin{align*}
\mathbb{T}v=\begin{cases}
v_1\quad&\text{ in } \Omega_1,\\
-v_2+2\mathcal{R}(v|_S)\quad&\text{ in } \Omega_2,
\end{cases}
\end{align*}
where $v_1=v|_{\Omega_1}$, $v_2=v|_{\Omega_2}$, and $\mathcal{R}$: $H^{1/2}_{qp}(S)\to H^1_{qp}(\Omega_2)$ is a bounded linear extension operator satisfying
\begin{align*}
\mathcal{R}(\psi)|_S=\psi,\quad\forall \psi\in H^{1/2}_{qp}(S).
\end{align*}

Let us  present the main result of this paper.
\begin{theo}\label{TM}
Under the assumption that
\begin{align*}
\mathop{\operatorname{ sup}}\limits_{\sigma>0}\dfrac{|\varepsilon_2|+\sigma/\omega}{\varepsilon_2^2+(\sigma/\omega)^2}K<1,
\end{align*}
where $K=\dfrac{\langle\nabla\mathcal{R}(u_{\sigma}|_{S}),\nabla\mathcal{R}(u_{\sigma}|_{S})\rangle_{\Omega_2}}{\langle\varepsilon_1^{-1}\nabla u_{\sigma},\nabla u_{\sigma}\rangle_{\Omega_1}}$, the following hold:
\begin{enumerate}
\item For all \( \omega \in\mathbb{R}_+\setminus G \), where \( G \subset \mathbb{R}_+ \) is a discrete set and \( \mathbb{R}_+ := (0, \infty) \), the boundary value problem \eqref{c1} admits a unique solution \( u_\sigma \in H^1_{qp}(\Omega) \).

\item The boundary value problem \eqref{c2} has a unique solution \( u_0 \in H^1_{qp}(\Omega) \), which satisfies
\[
\| u_0 \|_{H^1(\Omega)} \leqslant C \| g \|_{H^{-1/2}(\Gamma_0)},
\]
where \( u_0 \) is the limit of solutions \( u_\sigma \) to \eqref{c1} as \( \sigma \to 0 \), and \( C > 0 \) is a constant independent of \( g \) and \( \sigma \).
\end{enumerate}

\end{theo}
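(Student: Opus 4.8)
The plan is to treat the two assertions in sequence: first construct $u_\sigma$ for fixed $\sigma>0$ by a Fredholm argument built on the $\mathbb{T}$-coercivity of the sesquilinear form, and then extract $u_0$ as a limit by means of a $\sigma$-uniform a priori bound.

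For assertion (1) I would pass to the weak formulation of \eqref{c1}: multiply by a test function $v\in H^1_{qp}(\Omega)$, integrate by parts, and incorporate the two boundary conditions (the homogeneous Neumann condition on $\Gamma$ and the DtN condition on $\Gamma_0$) to obtain a sesquilinear form $a_\sigma(u,v)$ on $H^1_{qp}(\Omega)\times H^1_{qp}(\Omega)$ together with a bounded antilinear functional $\ell(v)=\varepsilon_1^{-1}\langle g,v\rangle_{\Gamma_0}$. Because $\mathrm{Re}\,\varepsilon_\sigma^{-1}$ changes sign across $S$, $a_\sigma$ is not coercive, so I would test against $\mathbb{T}u$ rather than $u$. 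Evaluating $a_\sigma(u,\mathbb{T}u)$ turns the principal part into $\int_{\Omega_1}\varepsilon_1^{-1}|\nabla u|^2+\int_{\Omega_2}|\varepsilon_\sigma^{-1}||\nabla u|^2$ plus a cross term generated by the extension $\mathcal{R}$, and the hypothesis $\sup_{\sigma>0}\frac{|\varepsilon_2|+\sigma/\omega}{\varepsilon_2^2+(\sigma/\omega)^2}K<1$ is exactly what I need to absorb that cross term via Young's inequality and recover a G\aa rding inequality $\mathrm{Re}\,a_\sigma(u,\mathbb{T}u)\ge c\|u\|_{H^1}^2-C\|u\|_{L^2}^2$ with $c>0$ independent of $\sigma$. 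Since $H^1_{qp}(\Omega)\hookrightarrow L^2(\Omega)$ is compact and the finitely many propagating modes of $T$ contribute a compact (indeed finite-rank) perturbation, the associated operator $A_\sigma(\omega)$ is Fredholm of index zero and depends analytically on $\omega$. It then remains to prove injectivity: taking $v=u_\sigma$ with $g=0$ and reading off the imaginary part, the absorption term $-\sigma\int_{\Omega_2}|u_\sigma|^2$, the dissipative term $-\tfrac{\sigma}{\omega|\varepsilon_\sigma|^2}\int_{\Omega_2}|\nabla u_\sigma|^2$ and the radiation term $-\varepsilon_1^{-1}\Lambda\sum_{|\alpha_n|<\kappa_1}\beta_n|u_{\sigma,n}|^2$ are all non-positive, forcing $u_\sigma\equiv0$ in $\Omega_2$; the transmission conditions then give zero Cauchy data on $S$, and Holmgren's theorem propagates this into $\Omega_1$. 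This works provided $\omega$ avoids the discrete set $G$ of Rayleigh--Wood frequencies at which some $\beta_n$ vanishes and the DtN framework degenerates, and away from $G$ the analytic Fredholm theorem yields existence and uniqueness.

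For assertion (2) the crucial point is a bound $\|u_\sigma\|_{H^1(\Omega)}\le C\|g\|_{H^{-1/2}(\Gamma_0)}$ with $C$ independent of $\sigma$. I would argue by contradiction: if no such $C$ exists there are $\sigma_k\to0$ and solutions $u_k$ with $\|u_k\|_{H^1}=1$ and $\|g_k\|_{H^{-1/2}}\to0$. By Rellich a subsequence converges weakly in $H^1$ and strongly in $L^2$ to some $u_\ast$, which solves the homogeneous problem \eqref{c2}; the uniform G\aa rding inequality above then upgrades this to strong $H^1$ convergence, since $\mathrm{Re}\,a_{\sigma_k}(u_k,\mathbb{T}u_k)=\mathrm{Re}\,\ell_k(\mathbb{T}u_k)\to0$ and $\|u_k\|_{L^2}\to0$, contradicting $\|u_k\|_{H^1}=1$ once we know $u_\ast=0$. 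Establishing $u_\ast=0$, i.e.\ uniqueness for \eqref{c2}, is the main obstacle: with $\sigma=0$ the coefficient $\varepsilon_0^{-1}$ is real, so the imaginary-part argument only kills the propagating modes on $\Gamma_0$ rather than all of $u_\ast$. This is precisely where the complementing boundary condition and the boundary $H^2$-regularity of Section \ref{cbc} enter; after flattening $S$ and reducing to an elliptic system on the half-plane, the a priori estimate for the Cauchy problem converts the vanishing of the radiating part into zero Cauchy data across $S$, and unique continuation then forces $u_\ast\equiv0$.

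Once uniqueness for \eqref{c2} is in hand the uniform bound holds, and every weak limit of $\{u_\sigma\}$ solves \eqref{c2} and must coincide with the unique solution $u_0$; hence the whole family converges as $\sigma\to0$, and passing $\|u_\sigma\|_{H^1}\le C\|g\|_{H^{-1/2}}$ to the limit delivers both the existence of $u_0$ and the claimed stability estimate. The step I expect to be most delicate is the $\sigma$-uniform a priori estimate near the sign-changing interface, since this is exactly where ellipticity in the usual sense is lost and the Lopatinskii--Shapiro (complementing) condition, guaranteed by $\varepsilon_1/\varepsilon_2\neq-1$, must be invoked to recover control.
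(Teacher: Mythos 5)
Your treatment of assertion (1) matches the paper's: both pass to the weak formulation, test against $\mathbb{T}u_\sigma$, use Young's inequality and the hypothesis on $K$ to make the principal part $b(\cdot,\cdot)$ coercive, and invoke the Fredholm alternative; your explicit injectivity argument for $\sigma>0$ (imaginary part kills $u_\sigma$ in $\Omega_2$, then transmission conditions plus Holmgren propagate into $\Omega_1$) is a sound addition that the paper does not spell out. For assertion (2) your contradiction argument with the normalization $\|u_k\|_{H^1}=1$ and the G\aa rding upgrade to strong convergence is also structurally sound and would, if completed, actually bypass the paper's detour through the interface $H^2$ estimates: the paper instead normalizes $\|\partial_\nu u_{\sigma_n}\|_{H^{-1/2}(S)}+\|u_{\sigma_n}\|_{H^{1/2}(\Gamma_0)}+\|u_{\sigma_n}\|_{L^2}$, and it is precisely to make the normal trace on $S$ a compact quantity that Lemmas \ref{az} and \ref{z5} and the Agmon--Douglis--Nirenberg estimate are needed there.

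The genuine gap is the step you yourself flag as the main obstacle: uniqueness of the homogeneous limit problem \eqref{c2}, i.e.\ $u_\ast=0$. Your proposed mechanism --- ``the complementing boundary condition and the boundary $H^2$-regularity \ldots\ convert the vanishing of the radiating part into zero Cauchy data across $S$, and unique continuation then forces $u_\ast\equiv 0$'' --- does not work as stated. The complementing (Lopatinskii--Shapiro) condition yields \emph{local a priori estimates} of ADN type, not a unique continuation principle; and the vanishing of the finitely many propagating Fourier modes of $u_\ast$ on $\Gamma_0$ (which is all the imaginary-part identity gives when $\sigma=0$) does not imply that $u_\ast$ vanishes in any open subset of $\Omega_1$ --- the evanescent modes survive, and without $u_\ast\equiv 0$ somewhere above $S$ you have no Cauchy data on $S$ to continue from. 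This is exactly the resonance issue, and it is why the conclusion can only hold outside a discrete set of frequencies. The paper closes this step differently: it runs the same $\mathbb{T}$-coercivity computation on the limit form $b(u_0,\mathbb{T}u_0)$ with $\sigma=0$, observes that the standing hypothesis forces $|\varepsilon_2^{-1}|K_0<1$, and concludes $u_\ast=0$ by the Fredholm alternative after removing a discrete set. You need to replace your unique-continuation sketch by this (or an equivalent) argument; once that is done, the rest of your plan --- uniform bound, extraction of the weak limit, identification of $u_0$ by uniqueness, and passage to the limit in the stability estimate --- goes through as in the paper.
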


\section{the complementing boundary condition}\label{cbc}
\subsection{Definition of the complementing boundary condition}
To prove Theorem \ref{TM}, we adopt the complementing boundary condition framework established by Agmon, Douglis, and Nirenberg  \cite{https://doi.org/10.1002/cpa.3160170104}. We first introduce key definitions and notation.

Consider the following boundary value problem
\begin{align}\label{def1}
\begin{cases}
\sum\limits_{j=1}^NL_{ij}(x;\partial)u_j(x)=F_i(x)\quad\text{in}\ \mathbb{R}^{n+1}_+,\\
\sum\limits_{j=1}^NB_{ij}(x;\partial)u_j(x)=\phi_i(x)\quad\text{on}\ \mathbb{R}^{n+1}_0,
\end{cases}
\end{align}
where the domains are defined as
\begin{align*}
\mathbb{R}^{n+1}_+:=\{x\in\mathbb{R}^{n+1}:\:x_{n+1}>0\},\quad\mathbb{R}^{n+1}_0:=\{x\in\mathbb{R}^{n+1}:\:x_{n+1}=0\},\quad i=1, \cdots, N,
\end{align*}
with the differential operator  $\partial=\left(\partial/{\partial x_1},\partial/{\partial x_2},\cdots,\partial/{\partial x_{n+1}}\right)$.
Let  $\triangle(x;\xi)={\rm det} (L'_{ij}(x;\xi))$, where $(L_{ij}^{'}(x;\xi))$ denotes  the  matrix consisting of the highest-order terms of $L_{ij}(x;\xi)$, and $\xi$ is a non-zero real vector.  The system \eqref{def1} is  elliptic in the sense of Douglis-Nirenberg if and only if
\begin{align*}
\triangle(x;\xi)\neq 0,\quad\forall\xi\in\mathbb{R}^{n+1}\setminus\{0\}.
\end{align*}
For $n>1$, Lopatinskii \cite{MR73828} proved that the order of $\triangle(x;\xi)$ is even (with respect to $\xi$). Decomposing the variables into $\xi'=(\xi_1,\xi_2,\cdots,\xi_n)$ and $\tau=\xi_{n+1}$,  the  determinant $\triangle(x;\xi)$ becomes a polynomial in $\tau$.
Crucially,  for any fixed nonzero real vector $\xi'$, if $\tau$ is a root of $\triangle(x;\xi',\tau)=0$, then $-\tau$ is a root of $\triangle(x;-\xi',-\tau)=0$. This symmetry implies that roots with positive/negative imaginary parts occur in conjugate pairs.

In this paper, we  analyze  a second-order $2\times2$ elliptic system in $\mathbb{R}^2$, where  for every $(x;\xi)$, $L_{ij}$ are second-order polynomials in $\xi$. Consequently,  the determinant $\triangle(x;\xi)$ becomes  a fourth-order polynomial in $\xi$, and the equation  $\triangle(x;\xi',\tau)=0$ possesses two pairs of complex conjugate  roots. Let $\tau_1^+(x;\xi')$ and $\tau_2^+(x;\xi')$ denote the roots with positive imaginary parts of the characteristic equation
$\triangle(x;\xi+\tau\vec{n})=0$, where $\tau$ is a complex variable, and  $\vec{n}$ and $\xi$ represent the normal and tangent vectors on $\mathbb{R}^2_0$, respectively.
The system is uniformly elliptic in the sense that there exist positive  constants $c_1>0, c_2>0 $,  satisfying
\begin{align*}
c_1|\xi|^2\leq|\triangle(x;\xi)|\leq c_2|\xi|^2,
\end{align*}
for every real vector $\xi=(\xi_1,\xi_2)$ and for every point $x$ in the closure of the considered domain.
Let $(L^*_{jk}(x,\xi+\tau\vec{n}))$ represent the adjoint matrix of $(L^{'}_{ij}(x;\xi+\tau\vec{n}))$, and $(B^{'}_{ij}(x;\xi))$ is defined as the matrix composed of the highest-order terms of $B_{ij}(x;\xi)$. Denote by
\begin{align*}
M^+(x;\xi',\tau)=\prod_{k=1}^2\left(\tau-\tau_k^+(x;\xi') \right).
\end{align*}
For the boundary problem \eqref{def1}, if the following definition is satisfied, then we refer to the conditions on the boundary $\mathbb{R}^2_0$ as the  complementing boundary conditions.

\begin{defi}\label{da}
( \cite[P42]{https://doi.org/10.1002/cpa.3160170104}) For any $x_0\in\mathbb{R}^2_0$ and non-zero real vector $\xi$ tangent to $\mathbb{R}^2_0$ at $x_0$, construct the  polynomial matrix
\begin{align*}
(B^{'}_{ij}(x_0;\xi+\tau\vec{n}))(L^*_{jk}(x_0;\xi+\tau\vec{n})),
\end{align*}
the complementing boundary condition requires that, under the modulus $M^+(x_0;\xi',\tau)$, the row vectors of this  matrix are  linearly independent. That is, if there exist constants $c_i, i=1, 2,$ such that
\begin{align*}
\sum_{i=1}^2c_i\sum_{j=1}^2(B^{'}_{ij})(L^*_{jk})\equiv 0\ ({\rm  mod}\:M^+),
\end{align*}
then  $c_1=c_2 =0$.
\end{defi}

The essence of the complementing boundary condition lies in algebraically coupling the boundary conditions with the intrinsic structure of the differential operator to ensure the well-posedness of the boundary value problem. Specifically, for the matrix $(L'_{ij})$ of an elliptic system, the adjoint matrix $(L^*_{jk})$ satisfies the identity $(L'_{ij})(L^*_{jk})=\triangle I$, thereby linking the solution space of the system to its determinant $\triangle$. For the  boundary problems,  roots of the discriminant polynomial $\triangle$ with positive imaginary parts characterize the solution decay
  in the normal direction.
  To capture the interaction between boundary operators and the differential structure, a polynomial matrix is constructed by combining $(B'_{ij})$ with $L^*_{jk}$. This coupling effectively reflects the interplay between the boundary conditions and the system's discriminant polynomial.
   Since we need to construct a polynomial matrix that captures the influence of the boundary conditions, the adjoint matrix is used to ``extract" non-trivial solutions at the zeros of the characteristic polynomial $\triangle$ ( i.e.,  $\triangle=0$) and to achieve ``matching" conditions when relating the boundary conditions to the solutions. For the well-posedness of the elliptic system, only the physically permissible decaying solutions should be retained. The modulo operation with respect to $M^+$  serves to filter out the decaying solutions while preserving the constraints imposed by the boundary conditions. If the row vectors remain linearly independent after the modulo $M^+$ operation, it indicates that the boundary condition ``completes" the deficiency of the differential operator, eliminating non-zero solutions. This is also a key prerequisite for establishing a priori estimates in an elliptic problem.
\subsection{Application of the complementing boundary condition and the $H^2$ regularity}\label{3.2}
Since not all equations defined on $\Omega$ are elliptic and  the function $f$ is not constant ( implying that the surface $S$ is not flat), it is necessary according to Definition \ref{da} to introduce variable transformations that  flatten $S$ and reformulate the original equations as the elliptic equations defined on the half-plane $\{x\in\mathbb{R}^2:\:x_2\geqslant 0\}$.

For the following transmission problem
\begin{align}\label{z18}
\begin{cases}
\nabla\cdot\left(\left(\varepsilon_2+\dfrac{{\rm i}\sigma}{\omega}\right)^{-1}\nabla u_\sigma\right)+\omega^2\mu_2u_\sigma+{\rm i}\sigma u_\sigma=0&\quad\text{in}\ \Omega_2,\\
\nabla\cdot\left(\varepsilon_1^{-1}\nabla u_\sigma\right)+\omega^2\mu_1u_\sigma=0&\quad\text{in}\ \Omega_1,\\
u_\sigma|_{\Omega_1}=u_\sigma|_{\Omega_2},\quad\varepsilon_1^{-1}\nabla u_\sigma|_{\Omega_1}\cdot\nu=\left(\varepsilon_2+\dfrac{{\rm i}\sigma}{\omega}\right)^{-1}\nabla u_\sigma|_{\Omega_2}\cdot\nu&\quad\text{on}\ S,
\end{cases}
\end{align}
we introduce the following variable transformations:
\begin{align*}
\Psi_{+}:\qquad\tilde{x}_1=\frac{2}{\Lambda}x_1-1,\quad\tilde{x}_2=\dfrac{x_2-f}{h_1-f},\quad f<x_2<h_1,
\end{align*}
\begin{align*}
\Psi_{-}:\qquad\tilde{x}_1=\frac{2}{\Lambda}x_1-1,\quad\tilde{x}_2=\dfrac{x_2-f}{f},\quad 0<x_2<f,
\end{align*}
i.e., $\Psi_{+}$ and $\Psi_{-}$ are invertible transformations from $\Omega_1$ to $D_+$ and $\Omega_2$ to $D_-$, respectively, where
\begin{align*}
D_+:=\{\tilde{x}\in\mathbb{R}^2: -1<\tilde{x}_1<1,\:0<\tilde{x}_2<1\},
\end{align*}
\begin{align*}
D_-:=\{\tilde{x}\in\mathbb{R}^2: -1<\tilde{x}_1<1,\:-1<\tilde{x}_2<0\}.
\end{align*}

Applying  the partial derivatives
\begin{align*}
\frac{\partial\tilde{x}_1}{\partial x_1}=\frac{2}{\Lambda},\quad\frac{\partial\tilde{x}_1}{\partial x_2}=0,\quad\frac{\partial\tilde{x}_2}{\partial x_1}=f^{'}\frac{\tilde{x}_2-1}{h_1-f},\quad\frac{\partial\tilde{x}_2}{\partial x_2}=\frac{1}{h_1-f}, \quad f<x_2<h_1,
\end{align*}
\begin{align*}
\frac{\partial\tilde{x}_1}{\partial x_1}=\frac{2}{\Lambda},\quad\frac{\partial\tilde{x}_1}{\partial x_2}=0,\quad\frac{\partial\tilde{x}_2}{\partial x_1}=-f^{'}\frac{\tilde{x}_2+1}{f},\quad\frac{\partial\tilde{x}_2}{\partial x_2}=\frac{1}{f},\quad 0<x_2<f,
\end{align*}
we derive the transformed gradients:
\begin{align*}
\nabla_xu_\sigma&=\left(\dfrac{\partial u_\sigma}{\partial x_1},\dfrac{\partial u_\sigma}{\partial x_2}\right)^T=\left( {\begin{array}{*{20}{c}}
\dfrac{\partial\tilde{x}_1}{\partial x_1}&\dfrac{\partial\tilde{x}_2}{\partial x_1}\\
\dfrac{\partial\tilde{x}_1}{\partial x_2}&\dfrac{\partial\tilde{x}_2}{\partial x_2}
\end{array}} \right)\left( {\begin{array}{*{20}{c}}
\dfrac{\partial u_\sigma}{\partial\tilde{x}_1}\\ \dfrac{\partial u_\sigma}{\partial\tilde{x}_2}
\end{array}} \right)\\&=\left( {\begin{array}{*{20}{c}}
\dfrac{2}{\Lambda}&\dfrac{f^{'}(\tilde{x}_2-1)}{h_1-f}\\
0&\dfrac{1}{h_1-f}
\end{array}} \right)\left( {\begin{array}{*{20}{c}}
\dfrac{\partial u_\sigma}{\partial\tilde{x}_1}\\ \dfrac{\partial u_\sigma}{\partial\tilde{x}_2}
\end{array}} \right)\triangleq\left(\dfrac{\partial\Psi_+}{\partial x}\right)^T\nabla_{\tilde{x}}u_\sigma,\quad f<x_2<h_1
\end{align*}
and
\begin{align*}
\nabla_xu_\sigma&=\left(\dfrac{\partial u_\sigma}{\partial x_1},\dfrac{\partial u_\sigma}{\partial x_2}\right)^T=\left( {\begin{array}{*{20}{c}}
\dfrac{\partial\tilde{x}_1}{\partial x_1}&\dfrac{\partial\tilde{x}_2}{\partial x_1}\\
\dfrac{\partial\tilde{x}_1}{\partial x_2}&\dfrac{\partial\tilde{x}_2}{\partial x_2}
\end{array}} \right)\left( {\begin{array}{*{20}{c}}
\dfrac{\partial u_\sigma}{\partial\tilde{x}_1}\\ \dfrac{\partial u_\sigma}{\partial\tilde{x}_2}
\end{array}} \right)\\&=\left( {\begin{array}{*{20}{c}}
\dfrac{2}{\Lambda}&\dfrac{-f^{'}(\tilde{x}_2+1)}{f}\\
0&\dfrac{1}{f}
\end{array}} \right)\left( {\begin{array}{*{20}{c}}
\dfrac{\partial u_\sigma}{\partial\tilde{x}_1}\\ \dfrac{\partial u_\sigma}{\partial\tilde{x}_2}
\end{array}} \right)\triangleq\left(\dfrac{\partial\Psi_-}{\partial x}\right)^T\nabla_{\tilde{x}}u_\sigma,\quad 0<x_2<f.
\end{align*}
Therefore, for any $\psi_1\in H^1_{qp}(\Omega_1)$ and $\psi_2\in H^1_{qp}(\Omega_2)$, we obtain
\begin{align*}
\int_{\Omega_1}&(\nabla\cdot(\varepsilon_1^{-1}\nabla u_\sigma(x))+\omega^2\mu_1u_\sigma(x))\psi_1(x){\rm d}x\\
=&\int_{D_+}\left(\nabla_{\tilde{x}}\cdot\left(\varepsilon_1^{-1}\left(\dfrac{\partial\Psi_+}{\partial x}\right)(\Psi^{-1}_+(\tilde{x}))\left(\dfrac{\partial\Psi_+}{\partial x}\right)^T(\Psi^{-1}_+(\tilde{x}))\nabla_{\tilde{x}}u_\sigma(\Psi_+^{-1}(\tilde{x}))\right)\right.\\
&\left.+\omega^2\mu_1u_\sigma(\Psi_+^{-1}(\tilde{x}))\right)\psi_1(\Psi_+^{-1}(\tilde{x}))J_+{\rm d}\tilde{x}
\end{align*}
and
\begin{align*}
\int_{\Omega_2}&\left(\nabla\cdot\left(\left(\varepsilon_2+\dfrac{{\rm i}\sigma}{\omega}\right)^{-1}\nabla u_\sigma(x)\right)+\omega^2\mu_2u_\sigma(x)+{\rm i}\sigma u_\sigma(x)\right)\psi_2(x){\rm d}x\\
=&\int_{D_-}\left(\nabla_{\tilde{x}}\cdot\left(\left(\varepsilon_2+\dfrac{\rm i\sigma}{\omega}\right)^{-1}\left(\dfrac{\partial\Psi_-}{\partial x}\right)(\Psi^{-1}_-(\tilde{x}))\left(\dfrac{\partial\Psi_-}{\partial x}\right)^T(\Psi^{-1}_-(\tilde{x}))\nabla_{\tilde{x}}u_\sigma(\Psi_-^{-1}(\tilde{x}))\right)\right.\\
&\left.+\omega^2\mu_2u_\sigma(\Psi_-^{-1}(\tilde{x}))+{\rm i}\sigma u_\sigma(\Psi_-^{-1}(\tilde{x}))\right)\psi_2(\Psi_-^{-1}(\tilde{x}))J_-{\rm d}\tilde{x},
\end{align*}
where $J_{\pm}=|{\rm det} \partial\Psi_{\pm}^{-1}(\tilde{x})|, \tilde{x}\in D_\pm$, i.e.,
\begin{align*}
J_+=\begin{vmatrix}
\dfrac{\partial x_1}{\partial\tilde{x}_1}&\dfrac{\partial x_1}{\partial\tilde{x}_2}\\
\dfrac{\partial x_2}{\partial\tilde{x}_1}&\dfrac{\partial x_2}{\partial\tilde{x}_2}
\end{vmatrix}
=\begin{vmatrix}
\dfrac{\Lambda}{2}&\dfrac{h_1-f}{f^{'}(\tilde{x}_2-1)}\\
0&h_1-f
\end{vmatrix}
=\dfrac{\Lambda}{2}(h_1-f),
\end{align*}
\begin{align*}
J_-=\begin{vmatrix}
\dfrac{\partial x_1}{\partial\tilde{x}_1}&\dfrac{\partial x_1}{\partial\tilde{x}_2}\\
\dfrac{\partial x_2}{\partial\tilde{x}_1}&\dfrac{\partial x_2}{\partial\tilde{x}_2}
\end{vmatrix}
=\begin{vmatrix}
\dfrac{\Lambda}{2}&\dfrac{f}{-f^{'}(\tilde{x}_2+1)}\\
0&f
\end{vmatrix}
=\dfrac{\Lambda}{2}f.
\end{align*}
Hence, under these transformations, the system becomes
\begin{align}\label{q1}
\begin{cases}
\left(\nabla\cdot\left(\left(\varepsilon_2+\dfrac{{\rm i}\sigma}{\omega}\right)^{-1}\Psi^*_-\nabla u_\sigma\right)+\omega^2\mu_2u_\sigma+{\rm i}\sigma u_\sigma\right)J_-=0&\ \text{in}\ D_-,\\
\left(\nabla\cdot\left(\varepsilon_1^{-1}\Psi^*_+\nabla u_\sigma\right)+\omega^2\mu_1u_\sigma\right)J_+=0&\ \text{in}\ D_+,\\
u_\sigma|_{D_+}=u_\sigma|_{D_-}&\ \text{on}\ \partial D_+\cap\{\tilde{x}\in\mathbb{R}^2: \tilde{x}_2=0\},\\
\varepsilon_1^{-1}J_+\left(\Psi^*_+\nabla u_\sigma|_{D_+}\right)^T\cdot\vec{n}=\left(\varepsilon_2+\dfrac{{\rm i}\sigma}{\omega}\right)^{-1}J_-\left(\Psi^*_-\nabla u_\sigma|_{D_-}\right)^T\cdot\vec{n}&\ \text{on}\ \partial D_+\cap\{\tilde{x}\in\mathbb{R}^2: \tilde{x}_2=0\},
\end{cases}
\end{align}
where $\vec{n}=(0,1)^T$  and
\begin{align*}
\Psi^*_\pm=\left(\dfrac{\partial\Psi_\pm}{\partial x}\right)(\Psi_\pm^{-1}(\tilde{x}))\left(\dfrac{\partial\Psi_\pm}{\partial x}\right)^T(\Psi_\pm^{-1}(\tilde{x})),\quad \tilde{x}\in D_\pm.
\end{align*}

We set $u_\sigma|_{D_-}=\left(\varepsilon_2+\dfrac{{\rm i}\sigma}{\omega}\right)\tilde{u}_\sigma$ and substitute $\left(\varepsilon_2+\dfrac{{\rm i}\sigma}{\omega}\right)\tilde{u}_\sigma$ into \eqref{q1}. It is clear that $|u_\sigma|=C|\tilde{u}_\sigma|$, where $C$ is a constant. To simplify the notation, we will continue to denote $\tilde{u}_\sigma$ as $u_\sigma$. Thus, we obtain the following elliptic system
\begin{align}\label{q2}
\begin{cases}
\left(\nabla\cdot\left(\Psi^*_-\nabla u_\sigma\right)+\left(\varepsilon_2+\dfrac{{\rm i}\sigma}{\omega}\right)\left(\omega^2\mu_2u_\sigma+{\rm i}\sigma u_\sigma\right)\right)J_-=0&\quad\text{in}\ D_-,\\
\left(\nabla\cdot\left(\varepsilon_1^{-1}\Psi^*_+\nabla u_\sigma\right)+\omega^2\mu_1u_\sigma\right)J_+=0&\quad\text{in}\ D_+,\\
u_\sigma|_{D_+}=\left(\varepsilon_2+\dfrac{{\rm i}\sigma}{\omega}\right)u_\sigma|_{D_-}&\quad\text{on}\ \partial D_+\cap\{\tilde{x}\in\mathbb{R}^2: \tilde{x}_2=0\},\\
\varepsilon_1^{-1}J_+\left(\Psi^*_+\nabla u_\sigma|_{D_+}\right)^T\cdot\vec{n}=J_-\left(\Psi^*_-\nabla u_\sigma|_{D_-}\right)^T\cdot\vec{n}&\quad\text{on}\ \partial D_+\cap\{\tilde{x}\in\mathbb{R}^2: \tilde{x}_2=0\}.
\end{cases}
\end{align}
Since the  equations in \eqref{q2} are defined respectively  in $D_+$ and $D_-$, we need to introduce a variable transformation that enables the equation originally defined in $D_-$ to be redefined in $D_+$ after the transformation.

For $\tilde{x}=(\tilde{x}_1,\tilde{x}_2)\in D_+$, define the transformed function
\begin{align*}
w_\sigma(\tilde{x}_1,\tilde{x}_2)=u_\sigma(\tilde{x}_1,-\tilde{x}_2),
\end{align*}
which satisfies
\begin{align*}
\left(\nabla\cdot\left(\hat{\Psi}_+^*\nabla w_\sigma\right)+\left(\varepsilon_2+\dfrac{{\rm i}\sigma}{\omega}\right)(\omega^2\mu_2w_\sigma+{\rm i}\sigma w_\sigma)\right)J_-=0\quad \text{in}\ D_+,
\end{align*}
\begin{align*}
w_\sigma=u_\sigma,\quad-J_-\left(\hat{\Psi}_+^*\nabla w_\sigma\right)^T\cdot\vec{n}=J_-\left(\Psi_-^*\nabla u_\sigma\right)^T\cdot\vec{n}\quad\text{on}\ \partial D_+\cap\{x\in\mathbb{R}^2,\:x_2=0\},
\end{align*}
where ${\Psi}_+^*$
is defined component-wise as
\begin{align*}
(\hat{\Psi}^*_+)_{ij}(\tilde{x}_1,\tilde{x}_2)=\begin{cases}
(\Psi_-^*)_{ij}(\tilde{x}_1,-\tilde{x}_2),&\quad i=j=1,2,\\
-(\Psi_-^*)_{ij}(\tilde{x}_1,-\tilde{x}_2),&\quad i=1,\ j=2\ \text{or}\ i=2,\ j=1.
\end{cases}
\end{align*}
After  the variable transformation, the system \eqref{q2} becomes
\begin{align}\label{z4}
\begin{cases}
\left(\nabla\cdot\left(\hat{\Psi}^*_+\nabla w_\sigma\right)+\left(\varepsilon_2+\dfrac{{\rm i}\sigma}{\omega}\right)\left(\omega^2\mu_2w_\sigma+{\rm i}\sigma w_\sigma\right)\right)J_-=0&\quad\text{in}\ D_+,\\
\left(\nabla\cdot\left(\varepsilon_1^{-1}\Psi^*_+\nabla u_\sigma\right)+\omega^2\mu_1u_\sigma\right)J_+=0&\quad\text{in}\ D_+,\\
u_\sigma=\left(\varepsilon_2+\dfrac{{\rm i}\sigma}{\omega}\right)w_\sigma&\quad\text{on}\ \partial D_+\cap\{\tilde{x}\in\mathbb{R}^2: \tilde{x}_2=0\},\\
\varepsilon_1^{-1}J_+\left(\Psi^*_+\nabla u_\sigma\right)^T\cdot\vec{n}=-J_-\left(\hat{\Psi}^*_+\nabla w_\sigma\right)^T\cdot\vec{n}&\quad\text{on}\ \partial D_+\cap\{\tilde{x}\in\mathbb{R}^2: \tilde{x}_2=0\}.
\end{cases}
\end{align}
Therefore,  via variable transformations, the two equations in \eqref{z18} are reformulated as elliptic equations in the half-plane $\{\tilde{x}\in\mathbb{R}^2:\:\tilde{x}_2\geqslant 0\}$
 with a partially flat boundary. The subsequent lemma aims to verify that the boundary conditions defined on $\partial D_+\cap\{\tilde{x}\in\mathbb{R}^2: \tilde{x}_2=0\}$ in \eqref{z4} satisfy the  complementing boundary conditions. For brevity, we denote $\tilde{x}$ as $x$ hereafter.

\begin{lemm}\label{az}
The boundary conditions on $\partial D_+\cap\{x\in\mathbb{R}^2: x_2=0\}$ in \eqref{z4} are complementing boundary conditions.
\end{lemm}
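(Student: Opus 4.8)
The plan is to verify Definition \ref{da} directly for the flattened system \eqref{z4}, exploiting that it is a \emph{diagonal} $2\times2$ second-order system: setting $(u_1,u_2)=(w_\sigma,u_\sigma)$, the first equation involves only $w_\sigma$ and the second only $u_\sigma$, so the principal symbol matrix is
\[
(L'_{ij}(x;\xi))=\begin{pmatrix} J_-\,\xi^{T}\hat{\Psi}^*_+\xi & 0\\[2pt] 0 & \varepsilon_1^{-1}J_+\,\xi^{T}\Psi^*_+\xi\end{pmatrix},
\]
up to the (irrelevant) overall sign convention for $\nabla\cdot(A\nabla\,\cdot)$. First I would record that $\Psi^*_+$ and $\hat{\Psi}^*_+$ are symmetric and positive definite: $\Psi^*_\pm=(\partial\Psi_\pm/\partial x)(\partial\Psi_\pm/\partial x)^{T}$ is a Gram matrix of an invertible map (its determinant is $2/(\Lambda(h_1-f))\neq0$, resp. $2/(\Lambda f)\neq0$), and negating the off-diagonal entries preserves both the diagonal and the determinant. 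Hence each diagonal entry is a non-vanishing quadratic form for real $\xi\neq0$, so $\triangle(x;\xi)=\det(L'_{ij})$ is a product of two such forms and the system is uniformly Douglis--Nirenberg elliptic; moreover the adjugate $(L^*_{jk})=\mathrm{diag}(\varepsilon_1^{-1}J_+\xi^T\Psi^*_+\xi,\;J_-\xi^T\hat\Psi^*_+\xi)$ is again diagonal.

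Next I would split $\xi=(\xi',\tau)$ with $\vec n=(0,1)$ and factor $\triangle(x;\xi',\tau)=0$ into the two quadratics $P_1:=J_-\,\xi^T\hat\Psi^*_+\xi$ and $P_2:=\varepsilon_1^{-1}J_+\,\xi^T\Psi^*_+\xi$; each has a conjugate pair of roots, and I name $\tau_1^+$ (from $P_1$) and $\tau_2^+$ (from $P_2$) the ones with positive imaginary part, so $M^+=(\tau-\tau_1^+)(\tau-\tau_2^+)$. Reading off the two boundary conditions of \eqref{z4}, the boundary symbol matrix has rows
\[
(B'_{1j})=\bigl(-(\varepsilon_2+\tfrac{{\rm i}\sigma}{\omega}),\;1\bigr),\qquad
(B'_{2j})=\bigl(J_-(\hat\Psi^*_+\xi)\!\cdot\!\vec n,\;\varepsilon_1^{-1}J_+(\Psi^*_+\xi)\!\cdot\!\vec n\bigr).
\]
Forming $(B'_{ij})(L^*_{jk})$ and using that $L^*$ is diagonal with entries $P_2,P_1$, the congruence $\sum_i c_i\sum_j B'_{ij}L^*_{jk}\equiv0\ (\mathrm{mod}\,M^+)$ becomes $P_2\bigl(-c_1(\varepsilon_2+\tfrac{{\rm i}\sigma}{\omega})+c_2 B'_{21}\bigr)\equiv0$ and $P_1\bigl(c_1+c_2 B'_{22}\bigr)\equiv0$. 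Since $P_2$ already carries the factor $(\tau-\tau_2^+)$ and $P_1$ the factor $(\tau-\tau_1^+)$, evaluating the first congruence at $\tau_1^+$ and the second at $\tau_2^+$ collapses the whole condition to the $2\times2$ linear system with matrix $\begin{pmatrix}-(\varepsilon_2+{\rm i}\sigma/\omega)& B'_{21}(\tau_1^+)\\ 1 & B'_{22}(\tau_2^+)\end{pmatrix}$, so the complementing condition is equivalent to non-vanishing of its determinant.

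The decisive computation is then to evaluate the two conormal boundary symbols at the characteristic roots. Writing $\hat\Psi^*_+=\begin{pmatrix}\hat p&\hat q\\ \hat q&\hat r\end{pmatrix}$ so that $P_1=J_-(\hat r\tau^2+2\hat q\xi'\tau+\hat p(\xi')^2)$, one sees $B'_{21}=J_-(\hat q\xi'+\hat r\tau)=\tfrac12 P_1'(\tau)$ is exactly half the $\tau$-derivative of the characteristic polynomial, so $B'_{21}(\tau_1^+)=\tfrac12 P_1'(\tau_1^+)={\rm i}J_-|\xi'|\sqrt{\det\hat\Psi^*_+}$, and identically $B'_{22}(\tau_2^+)={\rm i}\varepsilon_1^{-1}J_+|\xi'|\sqrt{\det\Psi^*_+}$. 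The determinant becomes
\[
-{\rm i}|\xi'|\Bigl[\varepsilon_1^{-1}\bigl(\varepsilon_2+\tfrac{{\rm i}\sigma}{\omega}\bigr)J_+\sqrt{\det\Psi^*_+}+J_-\sqrt{\det\hat\Psi^*_+}\Bigr],
\]
whose bracketed factor has imaginary part $\varepsilon_1^{-1}(\sigma/\omega)J_+\sqrt{\det\Psi^*_+}>0$, since $\varepsilon_1,\omega,J_+>0$ and $\sigma>0$. Thus the determinant never vanishes, forcing $c_1=c_2=0$, and the complementing boundary condition holds. I expect the \textbf{main obstacle} to be precisely this last step: correctly selecting the root with positive imaginary part while tracking the sign of $\xi'$, carrying out the clean reduction of the conormal symbol via $\tfrac12 P_k'$, and recognizing that it is exactly the absorption term ${\rm i}\sigma/\omega$---absent in the limiting problem $\sigma=0$, where the two positive real terms could cancel under $\varepsilon_1/\varepsilon_2=-1$---that guarantees a strictly positive imaginary part and hence non-degeneracy.
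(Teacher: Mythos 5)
Your proposal is correct and follows essentially the same route as the paper's proof: both verify Definition \ref{da} directly by showing each diagonal principal symbol has a conjugate pair of roots and then reducing the mod-$M^+$ condition to the non-vanishing of a $2\times2$ determinant, which holds because the absorption term contributes the strictly positive quantity $\varepsilon_1^{-1}(\sigma/\omega)J_+\sqrt{\det\Psi^*_+}$ to its imaginary part (the paper's identity \eqref{z10} is exactly this determinant condition written out in the coordinates $f,f',h_1,\Lambda$). Your evaluation-at-the-roots shortcut and the identity $B'_{2k}=\tfrac12 P_k'$ merely streamline the paper's coefficient comparison with the auxiliary parameters $\lambda_1,\lambda_2$, and your closing observation — that for $\sigma=0$ the determinant degenerates exactly at the critical ratio $\varepsilon_1/\varepsilon_2=-1$ — is consistent with the paper's standing assumption.
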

\begin{proof}
According to Definition \ref{da}, the proof proceeds in two steps.

\textbf {Step 1.} We prove that  equation $\triangle (x; \xi) =0$ possesses  two roots with positive imaginary parts when considered as an equation in $\tau$.

By \cite[Theorem 9.1]{MR1770682}, let $\xi=(\xi_1,\xi_2)$, where $\xi_1\in\mathbb{R}\setminus\{0\}, \xi_2=\tau$, and $\tau$ can take all complex numbers.
Define the matrix
\begin{align*}
(L_{ij}^{'}(x;\xi))=& \left( {\begin{array}{*{20}{c}}
\varepsilon_1^{-1}J_+(x)(\Psi^*_+)_{ij}(x)\xi_i\xi_j&0\\
0&J_-(x)(\hat{\Psi}^*_+)_{ij}(x)\xi_i\xi_j
\end{array}} \right),\quad i,j=1,2.
\end{align*}
Let $\triangle(x;\xi)={\rm det}(L_{ij}^{'}(x;\xi))$, which implies   that  $\triangle(x;\xi)$ is a fourth-order polynomial in $\xi_i$. For any two linearly independent vectors $\gamma_1=(\gamma_{1,1},\gamma_{1,2})$ and $\gamma_2=(\gamma_{2,1},\gamma_{2,2})$, we have
\begin{align*}
\triangle&(x;\gamma_1+\tau\gamma_2)=\\ &\varepsilon_1^{-1}J_+(x)\left((\Psi^*_+)_{ij}(x)(\gamma_1+\tau\gamma_2)_i(\gamma_1+\tau\gamma_2)_j\right)J_-(x)\left((\hat{\Psi}^*_+)_{ij}(x)(\gamma_1+\tau\gamma_2)_i(\gamma_1+\tau\gamma_2)_j\right).
\end{align*}
We verify that $\triangle(x;\gamma_1+\tau\gamma_2)=0$ has two roots with positive imaginary parts when considered as an equation in $\tau$. That is, we need to confirm  that the following two equations  contain a pair of complex conjugate roots
\begin{align}\label{w1}
J_+(x)\left((\Psi^*_+)_{ij}(x)(\gamma_1+\tau\gamma_2)_i(\gamma_1+\tau\gamma_2)_j\right)=0
\end{align}
and
\begin{align}\label{w2}
J_-(x)\left((\hat{\Psi}^*_+)_{ij}(x)(\gamma_1+\tau\gamma_2)_i(\gamma_1+\tau\gamma_2)_j\right)=0.
\end{align}
For \eqref{w1}, we compute
\begin{align*}
&\left(\dfrac{4}{\Lambda}(h_1-f)\gamma_{1,1}\gamma_{2,1}+2f^{'}(x_2-1)(\gamma_{1,1}\gamma_{2,2}+\gamma_{1,2}\gamma_{2,1})+\Lambda\dfrac{f^{'2}(x_2-1)^2+1}{h_1-f}\gamma_{1,2}\gamma_{2,2}\right)^2\\
&-4\left(\dfrac{2}{\Lambda}(h_1-f)\gamma_{2,1}^2+2f^{'}(x_2-1)\gamma_{2,1}\gamma_{2,2}+\dfrac{\Lambda}{2}\dfrac{f^{'2}(x_2-1)^2+1}{h_1-f}\gamma_{2,2}^2\right)\left(\dfrac{2}{\Lambda}(h_1-f)\gamma_{1,1}^2\right.\\
&\left.+2f^{'}(x_2-1)\gamma_{1,1}\gamma_{1,2}+\dfrac{\Lambda}{2}\dfrac{f^{'2}(x_2-1)^2+1}{h_1-f}\gamma_{1,2}^2\right)\\
=&-4\left(\gamma_{1,2}\gamma_{2,1}-\gamma_{1,1}\gamma_{2,2}\right)^2<0,
\end{align*}
which  implies that  \eqref{w1} has a pair of  complex conjugate roots. Similarly, \eqref{w2} also yields  a pair of complex conjugate  roots. Therefore, $\triangle(x;\gamma_1+\tau\gamma_2)=0$ indeed  has two roots with positive imaginary parts.

\textbf{ Step 2.} We prove that the rows of the matrix $(D_{ik})=(B'_{ij})(L^*_{jk})$ are linearly independent modulo $M^+$.

From Definition \ref{da}, the matrices are defined as
\begin{align*}
(L^*_{jk}(x;\xi))=& \left( {\begin{array}{*{20}{c}}
J_-(x)(\hat{\Psi}^*_+)_{ij}(x)\xi_i\xi_j&0\\
0&\varepsilon_1^{-1}J_+(x)(\Psi^*_+)_{ij}(x)\xi_i\xi_j
\end{array}} \right)
\end{align*}
and
\begin{align*}
(B'_{ij}(x;\xi))=& \left( {\begin{array}{*{20}{c}}
1&-\left(\varepsilon_2+\dfrac{{\rm i}\sigma}{\omega}\right)\\
\varepsilon_1^{-1}J_+(x)(\Psi^*_+)_{2i}(x)\xi_i&J_-(x)(\hat{\Psi}^*_+)_{2i}(x)\xi_i
\end{array}} \right).
\end{align*}
Consequently,
\begin{align*}
&(D_{ik}(x;\xi))=\varepsilon_1^{-1}\cdot\\& \left( {\begin{array}{*{20}{c}}
\varepsilon_1J_-(x)(\hat{\Psi}^*_+)_{ij}(x)\xi_i\xi_j&-\left(\varepsilon_2+\dfrac{{\rm i}\sigma}{\omega}\right)J_+(x)\left((\Psi^*_+)_{ij}(x)\xi_i\xi_j\right)\\
J_+(x)\left((\Psi^*_+)_{2i}(x)\xi_i\right)J_-(x)\left((\hat{\Psi}^*_+)_{ij}(x)\xi_i\xi_j\right)&J_-(x)\left((\hat{\Psi}^*_+)_{2i}(x)\xi_i\right)J_+(x)\left((\Psi^*_+)_{ij}(x)\xi_i\xi_j\right)
\end{array}} \right).
\end{align*}
Let
\begin{align*}
M^+(x;\xi_1,\tau)=(\tau-\tau_1^+(x,\xi_1))(\tau-\tau_2^+(x,\xi_1)),
\end{align*}
according to Definition \ref{da}, we aim to  verify that the row vectors of $(D_{ik}(x;\xi))({\rm mod}\: M^+)$ are linearly independent  on $\partial D_+\cap\{x\in\mathbb{R}^2: x_2=0\}$, i.e.,
\begin{align}\label{z17}
\sum_{i=1}^2c_iD_{ik}(x;\xi)\equiv 0\ (mod\: M^+)
\end{align}
iff $c_1=c_2=0$, where $x\in\partial D_+\cap\{x\in\mathbb{R}^2: x_2=0\}$, $k=1,2$.

Treating   $J_+(x)(\Psi^*_+)_{ij}(x)\xi_i\xi_j$ as a polynomial in $\tau$, we factorize
\begin{align}\label{z19}
J_+(x)(\Psi^*_+)_{ij}(x)\xi_i\xi_j=J_+(x)(\Psi^*_+)_{22}(x)(\tau-\tau_1^+(x,\xi_1))(\tau-\tau_1^-(x,\xi_1)),
\end{align}
where $\tau_1^\pm (x;\xi_1)$ denotes the root with a positive/ negative  imaginary part  of $J_+(x)(\Psi^*_+)_{ij}(x)\xi_i\xi_j=0$. Similarly,
\begin{align}\label{z20}
J_-(x)(\hat{\Psi}^*_+)_{ij}(x)\xi_i\xi_j=J_-(x)(\hat{\Psi}^*_+)_{22}(x)(\tau-\tau_2^+(x,\xi_1))(\tau-\tau_2^-(x,\xi_1)),
\end{align}
where $\tau_2^\pm(x;\xi_1)$ represents the root with a positive/
negative  imaginary part of $J_-(x)(\hat{\Psi}^*_+)_{ij}(x)\xi_i\xi_j=0$. According to the definition of the modulus operation in modern algebra, \eqref{z17} implies that there exists $k_1, k_2\in\mathbb{C}$ such that
\begin{align*}
c_1J_-(\hat{\Psi}_+^*)_{ij}\xi_i\xi_j+c_2\varepsilon_1^{-1}J_+\left((\Psi_+^*)_{2i}\xi_i\right)J_-\left((\hat{\Psi}_+^*)_{ij}\xi_i\xi_j\right)=k_1M^+,
\end{align*}
\begin{align*}
-c_1\left(\varepsilon_2+\frac{{\rm i}\sigma}{\omega}\right)\varepsilon_1^{-1}J_+\left((\Psi_+^*)_{ij}\xi_i\xi_j\right)+c_2\varepsilon_1^{-1}J_-\left((\hat{\Psi}_+^*)_{2i}\xi_i\right)J_+\left((\Psi_+^*)_{ij}\xi_i\xi_j\right)=k_2M^+.
\end{align*}
Based on \eqref{z19} and \eqref{z20}, we have
\begin{align*}
c_1J_-&(\hat{\Psi}^*_+)_{22}(\tau-\tau_2^+(\xi_1))(\tau-\tau_2^-(\xi_1))\\
&+c_2\varepsilon_1^{-1}J_+\left((\Psi_+^*)_{2i}\xi_i\right)J_-(\hat{\Psi}^*_+)_{22}(\tau-\tau_2^+(\xi_1))(\tau-\tau_2^-(\xi_1))=k_1M^+,
\end{align*}
\begin{align*}
-c_1&\left(\varepsilon_2+\frac{{\rm i}\sigma}{\omega}\right)\varepsilon_1^{-1}J_+(\Psi^*_+)_{22}(\tau-\tau_1^+(\xi_1))(\tau-\tau_1^-(\xi_1))\\
&+c_2\varepsilon_1^{-1}J_-\left((\hat{\Psi}_+^*)_{2i}\xi_i\right)J_+(\Psi^*_+)_{22}(\tau-\tau_1^+(\xi_1))(\tau-\tau_1^-(\xi_1))=k_2M^+.
\end{align*}
Comparing the coefficients of $\tau$ on both sides of the two equations, it can be seen that there exists $\lambda_1,\lambda_2\in\mathbb{C}$ such that
\begin{align*}
\left(c_1+c_2\varepsilon_1^{-1}J_+((\Psi_+^*)_{2i}\xi_i)\right)&J_-(\hat{\Psi}^*_+)_{22}(\tau-\tau_2^+(\xi_1))(\tau-\tau_2^-(\xi_1))\\
&=c_2\varepsilon_1^{-1}J_+(\Psi^*_+)_{22}J_-(\hat{\Psi}^*_+)_{22}(\tau-\lambda_1)M^+,
\end{align*}
\begin{align*}
\left(-c_1\left(\varepsilon_2+\frac{{\rm i}\sigma}{\omega}\right)\varepsilon_1^{-1}+c_2\varepsilon_1^{-1}J_-\left((\hat{\Psi}_+^*)_{2i}\xi_i\right)\right)&J_+(\Psi_+^*)_{22}(\tau-\tau_1^+(\xi_1))(\tau-\tau_1^-(\xi_1))\\
&=c_2\varepsilon_1^{-1}J_+(\Psi^*_+)_{22}J_-(\hat{\Psi}^*_+)_{22}(\tau-\lambda_2)M^+.
\end{align*}
Considering these two equations as a system of equations in terms of $c_1$ and $c_2$, we will prove by contradiction that $c_1=c_2=0$.
If $c_1$ and $c_2$ are not both zero, then the determinant of the coefficient matrix (considered as a polynomial in $\tau$) is identically zero, i.e.,
\begin{align*}
&(\tau-\tau_1^-)(\tau-\tau_2^-)\left(J_-\left((\hat{\Psi}_+^*)_{2i}\xi_i\right)+\left(\varepsilon_2+\dfrac{{\rm i}\sigma}{\omega}\right)\varepsilon_1^{-1}J_+\left((\Psi_+^*)_{2i}\xi_i\right)\right)\\
\equiv&\dfrac{f^{'2}+1}{f}\dfrac{\Lambda}{2}(\tau-\tau_2^-)(\tau-\lambda_2)(\tau-\tau_2^+)+\left(\varepsilon_2+\dfrac{{\rm i}\sigma}{\omega}\right)\varepsilon_1^{-1}\dfrac{f^{'2}+1}{h_1-f}\dfrac{\Lambda}{2}(\tau-\tau_1^-)(\tau-\lambda_1)(\tau-\tau_1^+).
\end{align*}
Since the left side of the equation is the product of two polynomials in $\tau$, we have $\lambda_1=\tau_2^-$ and $\lambda_2=\tau_1^-$. Then
\begin{align*}
J_-\left((\hat{\Psi}_+^*)_{2i}\xi_i\right)+&\left(\varepsilon_2+\dfrac{{\rm i}\sigma}{\omega}\right)\varepsilon_1^{-1}J_+\left((\Psi_+^*)_{2i}\xi_i\right)\\
&\equiv\dfrac{f^{'2}+1}{f}\dfrac{\Lambda}{2}(\tau-\tau_2^+)
+\left(\varepsilon_2+\dfrac{{\rm i}\sigma}{\omega}\right)\varepsilon_1^{-1}\dfrac{f^{'2}+1}{h_1-f}\dfrac{\Lambda}{2}(\tau-\tau_1^+).
\end{align*}
By simple calculation, we can obtain
\begin{align}\label{z10}
f^{'}\xi_1-\left(\varepsilon_2+\dfrac{{\rm i}\sigma}{\omega}\right)\varepsilon_1^{-1}f^{'}\xi_1
+\dfrac{f^{'2}+1}{f}\dfrac{\Lambda}{2}\tau_2^++\left(\varepsilon_2+\dfrac{{\rm i}\sigma}{\omega}\right)\varepsilon_1^{-1}\dfrac{f^{'2}+1}{h_1-f}\dfrac{\Lambda}{2}\tau_1^+=0.
\end{align}

Since $\tau_1^+$ is the root with positive  imaginary part of $J_+(\Psi_+^*)_{ij}(x)\xi_i\xi_j=0$ and $\tau_2^+$ is the root with positive imaginary part of $J_-(\hat{\Psi}_+^*)_{ij}(x)\xi_i\xi_j=0$, we derive
\begin{align*}
\tau_1^\pm=\dfrac{h_1-f}{\Lambda(f^{'2}+1)}(2f^{'}\xi_1\pm2{\rm i}\xi_1),
\quad\tau_2^\pm=\dfrac{f}{\Lambda(f^{'2}+1)}(-2f^{'}\xi_1\pm 2 {\rm i}\xi_1).
\end{align*}
Consequently, their real parts
\begin{align*}
\operatorname{Re} \tau_1^+=\dfrac{h_1-f}{\Lambda(f^{'2}+1)}2f^{'}\xi_1,\quad \operatorname{Re}\tau_2^+=-\dfrac{f}{\Lambda(f^{'2}+1)}2f^{'}\xi_1.
\end{align*}
According to \eqref{z10}, by considering the real and imaginary parts separately, we obtain
\begin{align*}
\begin{cases}
f^{'}\xi_1-\varepsilon_2\varepsilon_1^{-1}f^{'}\xi_1+
\dfrac{f^{'2}+1}{f}\dfrac{\Lambda}{2}\operatorname{Re}\tau_2^+
+\varepsilon_2\varepsilon_1^{-1}\dfrac{f^{'2}+1}{h_1-f}\dfrac{\Lambda}{2}\operatorname{Re}\tau_1^+
-\dfrac{\sigma}{\omega}\varepsilon_1^{-1}\dfrac{f^{'2}+1}{h_1-f}\dfrac{\Lambda}{2}\operatorname{Im}\tau_1^+=0,\\
-\dfrac{\sigma}{\omega}\varepsilon_1^{-1}f^{'}\xi_1+\dfrac{f^{'2}+1}{f}\dfrac{\Lambda}{2}\operatorname{Im}\tau_2^+
+\dfrac{\sigma}{\omega}\varepsilon_1^{-1}\dfrac{f^{'2}+1}{h_1-f}\dfrac{\Lambda}{2}\operatorname{Re}\tau_1^++\varepsilon_2\varepsilon_1^{-1}\dfrac{f^{'2}+1}{h_1-f}\dfrac{\Lambda}{2} \operatorname{Im}\tau_1^+=0.
\end{cases}
\end{align*}
Substituting $\operatorname{Re}\tau_1^+$ and $\operatorname{Re}\tau_2^+$ into  the above  two equations, it reduces to
\begin{align*}
\begin{cases}
-\dfrac{\sigma}{\omega}\varepsilon_1^{-1}\dfrac{{f'}^2+1}{h_1-f}\dfrac{\Lambda}{2}\operatorname{Im}\tau_1^+\equiv 0,\\
\dfrac{{f'}^2+1}{f}\dfrac{\Lambda}{2}\operatorname{Im}\tau_2^++\varepsilon_2\varepsilon_1^{-1}\dfrac{{f'}^2+1}{h_1-f}\dfrac{\Lambda}{2}\operatorname{Im}\tau_1^+\equiv 0.
\end{cases}
\end{align*}
It follows from  $\operatorname{Im}\tau_1^+>0$ that  $-\dfrac{\sigma}{\omega}\varepsilon_1^{-1}\dfrac{{f'}^2+1}{h_1-f}\dfrac{\Lambda}{2}\operatorname{Im}\tau_1^+\neq 0$, forming a contradiction, which means $c_1=c_2=0$. Therefore, the boundary conditions on $\partial D_+\cap\{x\in\mathbb{R}^2: x_2=0\}$ are complementing boundary conditions.
\end{proof}

The following establishes the boundary  $H^2$-regularity result for the solutions of \eqref{z4}. The proof is based on the definition of $H^2(D_+)$.
\begin{lemm}\label{z5}
Let $u_\sigma, w_\sigma\in H^1(D_+)$ be  solutions to \eqref{z4}. Then for any $0<r<1$,
 \[u_\sigma, w_\sigma\in H^2(B_+(0,r)),\]
  where $B_+(0,r):=\{x\in\mathbb{R}^2_+:\:x_1^2+x_2^2<r^2\}$.
\end{lemm}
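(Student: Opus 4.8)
The plan is to obtain the two tangential second derivatives by a difference-quotient argument along the flat boundary, and then to read off the purely normal second derivative from the equations themselves, using that Lemma~\ref{az} has already verified the complementing boundary condition for \eqref{z4}. Fix radii $0<r<r'<1$ and a cut-off function $\zeta\in C^\infty_0(B_+(0,r'))$ with $\zeta\equiv 1$ on $B_+(0,r)$. Since the origin is an interior point of the flat boundary piece $\{x_2=0\}$, for $h>0$ sufficiently small the tangential difference quotient
$D_1^h v(x)=\big(v(x_1+h,x_2)-v(x_1,x_2)\big)/h$
is well defined on $\operatorname{supp}\zeta$ and maps $H^1$ into itself.

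First I would apply $D_1^h$ to the two interior equations and to the transmission/boundary conditions in \eqref{z4}. Using the discrete Leibniz rule $D_1^h(a v)=a(\cdot+he_1)\,D_1^h v+(D_1^h a)\,v$ together with the hypothesis $f\in C^2$ — which makes the coefficients $\Psi^*_+,\hat{\Psi}^*_+,J_\pm$ of class $C^1$ in the tangential variable, with difference quotients bounded uniformly in $h$ — the pair $(D_1^h u_\sigma, D_1^h w_\sigma)$ solves a system with the same principal part (evaluated at the shifted argument) plus lower-order terms, whose interior right-hand side and whose data on $\{x_2=0\}$ are bounded in $L^2$ and in $H^{1/2}$ respectively, with bound controlled by $\|u_\sigma\|_{H^1(D_+)}+\|w_\sigma\|_{H^1(D_+)}$.

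Next I would invoke the a priori estimate afforded by the Agmon--Douglis--Nirenberg theory: because the system is uniformly elliptic and, by Lemma~\ref{az}, the boundary conditions on $\{x_2=0\}$ are complementing, the coercive energy inequality for the coupled system yields a bound
\[
\|D_1^h u_\sigma\|_{H^1(B_+(0,r))}+\|D_1^h w_\sigma\|_{H^1(B_+(0,r))}\leqslant C
\]
that is uniform in $h$. Letting $h\to 0$ and applying the difference-quotient characterization of Sobolev spaces gives $\partial_{x_1}u_\sigma,\partial_{x_1}w_\sigma\in H^1(B_+(0,r))$; equivalently, the tangential second derivatives $\partial^2_{x_1}u_\sigma$ and $\partial_{x_1}\partial_{x_2}u_\sigma$, and their analogues for $w_\sigma$, lie in $L^2(B_+(0,r))$.

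Finally, it remains to control the purely normal derivative $\partial^2_{x_2}u_\sigma$. Uniform ellipticity forces the coefficient $(\Psi^*_+)_{22}$ of $\partial^2_{x_2}u_\sigma$ (respectively $(\hat{\Psi}^*_+)_{22}$ for $w_\sigma$) to be bounded away from zero, so each interior equation in \eqref{z4} can be solved algebraically for $\partial^2_{x_2}u_\sigma$ in terms of $\partial^2_{x_1}u_\sigma$, $\partial_{x_1}\partial_{x_2}u_\sigma$, the first-order derivatives, and $u_\sigma$ itself — all members of $L^2(B_+(0,r))$ by the previous step and the $H^1$ hypothesis. Hence $\partial^2_{x_2}u_\sigma\in L^2(B_+(0,r))$ and likewise for $w_\sigma$, so that $u_\sigma,w_\sigma\in H^2(B_+(0,r))$. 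The hard part will be the uniform-in-$h$ estimate of the third step: one must check that the difference quotients of the variable coefficients and of the transmission data stay bounded, and that the coupling between $u_\sigma$ and $w_\sigma$ through the interface conditions is handled at the level of the coupled system rather than equation by equation, which is precisely where the complementing condition established in Lemma~\ref{az} is indispensable.
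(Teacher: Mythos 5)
Your overall skeleton agrees with the paper's: tangential difference quotients to control $(u_\sigma)_{x_1x_1}$ and $(u_\sigma)_{x_1x_2}$, followed by solving the interior equation algebraically for $(u_\sigma)_{x_2x_2}$ using that $(\Psi^*_+)_{22}$ is bounded away from zero. The final step of your argument matches the paper's Step 2 essentially verbatim. However, the crucial middle step --- the uniform-in-$h$ bound on $\|D_1^h u_\sigma\|_{H^1}$ --- is where you have a genuine gap. You assert that ``the coercive energy inequality for the coupled system,'' guaranteed by the complementing condition of Lemma~\ref{az}, yields $\|D_1^h u_\sigma\|_{H^1}+\|D_1^h w_\sigma\|_{H^1}\leqslant C$. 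No such coercive inequality is available: the complementing (Lopatinskii--Shapiro) condition underlies the Agmon--Douglis--Nirenberg \emph{a priori} estimates, which bound $\|u\|_{H^2}$ in terms of $L^2$ data \emph{for solutions already known to lie in} $H^2$; it does not produce an $H^1$-level coercivity estimate for the transmission system. Indeed, the whole difficulty of this paper is that the underlying sesquilinear form is non-coercive because of the sign change, and even after the normalization leading to \eqref{z4} the two bulk equations are coupled through interface conditions ($u_\sigma=(\varepsilon_2+{\rm i}\sigma/\omega)w_\sigma$ and a flux condition with a minus sign) whose boundary contributions do not cancel or acquire a sign when the two weak formulations are added. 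Your proposal is also structurally circular: Lemma~\ref{z5} exists precisely to supply the qualitative $H^2$ regularity that makes the ADN estimate (invoked later, in Step 2 of the proof of Theorem~\ref{TM}) applicable; one cannot use that estimate to prove the lemma.

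The paper closes this gap differently, and without any appeal to Lemma~\ref{az}. It works with the weak formulation \eqref{z11} of the single equation for $u_\sigma$ alone, inserts the test function $\varphi=-D_1^h(\eta^2 D_1^h u_\sigma)$, and obtains the coercive lower bound from the uniform ellipticity of $J_+\Psi^*_+$ applied to the principal term $A_1$, with all commutator terms $A_2$ absorbed by Young's inequality since $f\in C^2$. The interface coupling is not differentiated at all: the boundary term involving $w_\sigma$ is treated as fixed data and estimated by $\|\nabla w_\sigma\cdot\vec{n}\|_{H^{-1/2}(\partial D_+\cap\{x_2=0\})}\,\|\varphi\|_{H^{1/2}}$, the former being controlled by $\|w_\sigma\|_{H^1(D_+)}$. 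If you want to salvage your route, you must either reproduce this single-equation energy argument or prove an $H^1$ a priori estimate for the coupled system from scratch --- and in the latter case you would still need to control $D_1^h$ of the interface data in $H^{1/2}\times H^{-1/2}$ uniformly in $h$, which requires trace-level information on $\nabla u_\sigma$ and $\nabla w_\sigma$ that is exactly what is being established.
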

\begin{proof}
Since $u_\sigma\in H^1(D_+)$, according to the definition of the Sobolev space $H^2(D_+)$, it suffices to prove the boundedness of  $\|(u_\sigma)_{x_ix_j}\|_{L^2(B_+(0,r))}$ for all   $i,j=1,2$.
The proof proceeds in two stages.

\textbf {Step 1.} We prove the boundedness  of  $\|(u_\sigma)_{x_ix_j}\|_{L^2(B_+(0,r))}$  for all indices $i, j$, except  $(i, j)=(2, 2)$.

Define a smooth cutoff function  $\eta$ satisfying
\begin{align*}
\begin{cases}
\eta\equiv1&\quad\text{in}\ B(0,r),\\
\eta\equiv0&\quad\text{in}\ \mathbb{R}^2\setminus B(0,1),\\
0\leqslant\eta\leqslant1&\quad\text{in}\ B(0,1)\setminus B(0,r),
\end{cases}
\end{align*}
which induces the properties
\begin{align*}
\begin{cases}
\eta\equiv1&\quad\text{in}\ B(0,r)\cap D_+,\\
\eta=0&\quad\text{on}\ \partial D_+\setminus\left(\partial D_+\cap\{x\in\mathbb{R}^2:\:x_2=0\}\right),\\
\eta\neq 0&\quad\text{on}\ \partial D_+\cap\{x\in\mathbb{R}^2:\:x_2=0\}.
\end{cases}
\end{align*}
Multiplying both sides of  equation  \eqref{z4} by $\varphi\in H^1(D_+)$, integrating over $D_+$, and applying integration by parts with the boundary condition on $\partial D_+\cap\{x\in\mathbb{R}^2:\:x_2=0\}$  yields
\begin{align}\label{z11}
\begin{split}
-\sum_{i,j}\int_{\partial D_+}(\hat{\Psi}_+^*)_{ij}(w_\sigma)_{x_j}n_i\varphi J_- {\rm d}s
&-\sum_{i,j}\int_{D_+}\varepsilon_1^{-1}(\Psi_+^*)_{ij}(u_\sigma)_{x_j}(\varphi J_+)_{x_i}{\rm d}x
\\&+\int_{D_+}\omega^2\mu_1 u_\sigma\varphi J_+ {\rm d}x=0.
\end{split}
\end{align}

For sufficiently small constant   $h>0$,  define the test function \[\varphi:=-D_1^h(\eta^2D_1^hu_\sigma),\quad x\in D_+,\]
 where
\begin{align*}
D_1^hu_\sigma=\dfrac{u_\sigma(x+he_1)-u_\sigma(x)}{h},\quad e_1=(1,0).
\end{align*}
Explicitly,
\begin{align*}
\varphi=\dfrac{1}{h^2}\left(\eta^2(x-he_1)(u_\sigma(x)-u_\sigma(x-he_1))-\eta^2(x)(u_\sigma(x+he_1)-u_\sigma(x))\right).
\end{align*}
This ensures  $\varphi\in H^1(D_+)$ with the following boundary behaviors
\begin{align*}
\begin{cases}
\varphi=0\quad\text{on}\ \partial D_+\setminus\left(\partial D_+\cap\{x\in\mathbb{R}^2:\:x_2=0\}\right),\\
\varphi\neq 0\quad\text{on}\ \partial D_+\cap\{x\in\mathbb{R}^2:\:x_2=0\}.
\end{cases}
\end{align*}
Furthermore, from  \cite[P293]{MR2597943}, we can derive the inequality
\begin{align}\label{z12}
\int_{D_+}|\varphi|^2{\rm d}x\leqslant C\int_{D_+}(|Du_\sigma|^2+\eta^2|D_1^hDu_\sigma|^2){\rm d}x.
\end{align}
Substituting $\varphi$ into \eqref{z11}, we derive the identity
\begin{align*}
\sum_{i,j}\int_{D_+}\varepsilon_1^{-1}(\Psi_+^*)_{ij}(u_\sigma)_{x_j}(\varphi J_+)_{x_i}{\rm d}x=&-\sum_{i,j}\int_{\partial D_+\cap\{x\in\mathbb{R}^2:\: x_2=0\}}(\hat{\Psi}_+^*)_{ij}(w_\sigma)_{x_j}n_i\varphi J_-{\rm d}s\\
&+\int_{D_+}\omega^2\mu_1u_\sigma\varphi J_+{\rm d}x,
\end{align*}
denotes  as $A=B$.

According to the properties of the difference operator, we have
\begin{align*}
A=&\sum_{i,j}\int_{D_+}\varepsilon_1^{-1}\left(D_1^h(J_+(\Psi_+^*)_{ij})(u_\sigma)_{x_j}2\eta\eta_{x_i}D_1^hu_\sigma
+D_1^h(J_+(\Psi_+^*)_{ij})(u_\sigma)_{x_j}\eta^2(D_1^hu_\sigma)_{x_i}\right.\\
&\left.+(J_+(\Psi_+^*)_{ij})^hD_1^h(u_\sigma)_{x_j}2\eta\eta_{x_i}D_1^hu_\sigma
+(J_+(\Psi_+^*)_{ij})^hD_1^h(u_\sigma)_{x_j}\eta^2(D_1^hu_\sigma)_{x_i}\right.\\
&\left.+D_1^h((J_+)_{x_1}(\Psi_+^*)_{ij})(u_\sigma)_{x_j}(\eta^2D_1^hu_\sigma)
+((J_+)_{x_1}(\Psi_+^*)_{ij})^hD_1^h(u_\sigma)_{x_j}(\eta^2D_1^hu_\sigma)\right){\rm d}x,
\end{align*}
where $(J_+(\Psi_+^*)_{ij})^h=J_+(x+he_1)(\Psi_+^*)_{ij}(x+he_1)$.
Define
\begin{align*}
&A_1:=\sum_{i,j}\int_{D_+}\varepsilon_1^{-1}(J_+(\Psi_+^*)_{ij})^hD_1^h(u_\sigma)_{x_j}\eta^2(D_1^hu_\sigma)_{x_i}{\rm d}x,\\
A_2:=\sum_{i,j}\int_{D_+}&\left(\varepsilon_1^{-1}D_1^h(J_+(\Psi_+^*)_{ij})(u_\sigma)_{x_j}2\eta\eta_{x_i}D_1^hu_\sigma+\varepsilon_1^{-1}D_1^h(J_+(\Psi_+^*)_{ij})(u_\sigma)_{x_j}\eta^2(D_1^hu_\sigma)_{x_i}\right.\\
&\left.+\varepsilon_1^{-1}(J_+(\Psi_+^*)_{ij})^hD_1^h(u_\sigma)_{x_j}2\eta\eta_{x_i}D_1^hu_\sigma+\varepsilon_1^{-1}D_1^h((J_+)_{x_1}(\Psi_+^*)_{ij})(u_\sigma)_{x_j}(\eta^2D_1^hu_\sigma)\right.\\
&\left.+\varepsilon_1^{-1}((J_+)_{x_1}(\Psi_+^*)_{ij})^hD_1^h(u_\sigma)_{x_j}(\eta^2D_1^hu_\sigma)\right){\rm d}x,
\end{align*}
yielding  $A=A_1+A_2$.

Since the matrix $J_+(x)\Psi_+^*(x)$ satisfies the uniform elliptic condition, there exists $\lambda_3>0$ such that
\begin{align*}
|A_1|\geqslant&\lambda_3\int_{D_+}\varepsilon_1^{-1}\eta^2|D_1^hDu_\sigma|^2{\rm d}x.
\end{align*}
Given that $f\in C^2$ and $\eta\in C^\infty$, according to the property of the difference operator and the Young's inequality with  $\epsilon=\dfrac{\lambda_3}{2C}$, we can get
\begin{align*}
|A_2|\leqslant\varepsilon_1^{-1}\left(\dfrac{\lambda_3}{2}\int_{B_+(0,1)}|D_1^hDu_\sigma|^2\eta^2 {\rm d}x
+C\int_{B_+(0,1)}\left(|D_1^hu_\sigma|^2+|Du_\sigma|^2\right) {\rm d}x\right).
\end{align*}
Based on \cite[P293]{MR2597943},  this simplifies to
\begin{align*}
|A_2|\leqslant\varepsilon_1^{-1}\left(\dfrac{\lambda_3}{2}\int_{D_+}|D_1^hDu_\sigma|^2\eta^2{\rm d}x+C\int_{D_+}|Du_\sigma|^2 {\rm d}x\right).
\end{align*}
Combining  $A=A_1+A_2$  with the triangle inequality, we have
\begin{align}\label{z13}
|A|\geqslant\dfrac{\lambda_3}{2}\int_{D_+}\varepsilon_1^{-1}\eta^2|D_1^hDu_\sigma|^2 {\rm d} x-C\int_{D_+}|Du_\sigma|^2 {\rm d}x.
\end{align}

Given that $f\in C^2$, while $\omega$ and $\mu_1$ are bounded functions, according to \eqref{z12} and the Young's inequality with  $\epsilon=\dfrac{\lambda_3}{2C}$, we can get
\begin{align}\label{z14}
\begin{split}
|B|\leqslant\varepsilon_1^{-1}&\left(C\left|\sum_{i,j}\int_{\partial D_+\cap\{x\in\mathbb{R}^2:\:x_2=0\}}(w_\sigma)_{x_j}n_i\varphi {\rm d}s\right|
+\dfrac{\lambda_3}{4}\int_{D_+}\eta^2|D_1^hDu_\sigma|^2{\rm d}x\right.\\
&\left.+\dfrac{\lambda_3}{4}\int_{D_+}|Du_\sigma|^2{\rm d}x+C\int_{D_+}|u_\sigma|^2{\rm d}x\right).
\end{split}
\end{align}
From  $A=B$  combined with the inequalities \eqref{z13} and \eqref{z14}, we obtain
\begin{align*}
\int_{B_+(0,r)}|D_1^hDu_\sigma|^2{\rm d}x\leqslant&\int_{B_+(0,1)}\eta^2|D_1^hDu_\sigma|^2{\rm d}x\\
\leqslant&C\left(\left|\sum_{i,j}\int_{\partial D_+\cap\{x\in\mathbb{R}^2:\:x_2=0\}}(w_\sigma)_{x_j}n_i\varphi {\rm d}s\right|
+\int_{D_+}(|Du_\sigma|^2+|u_\sigma|^2){\rm d}x\right).
\end{align*}
Once it is proved that the right side of the above inequality is bounded, it can be derived that $(u_\sigma)_{x_1}\in H^1(B_+(0,r))$. To prove this, it is sufficient to show that
\begin{align*}
\left|\sum_{i,j}\int_{\partial D_+\cap\{x\in\mathbb{R}^2:\:x_2=0\}}(w_\sigma)_{x_j}n_i\varphi {\rm d}s\right|\leqslant C.
\end{align*}
According to the Young inequality, one has
\begin{align*}
&\left|\sum_{i,j}\int_{\partial D_+\cap\{x\in\mathbb{R}^2:\:x_2=0\}}(w_\sigma)_{x_j}n_i\varphi {\rm d} s\right|\\
\leqslant&C\left(\|\nabla w_\sigma\cdot\vec{n}\|_{H^{-\frac{1}{2}}(\partial D_+\cap\{x\in\mathbb{R}^2:\:x_2=0\})}^2+\|\varphi\|_{H^{\frac{1}{2}}(\partial D_+\cap\{x\in\mathbb{R}^2:\:x_2=0\})}^2\right).
\end{align*}
Hence we only need to show
\begin{align*}
\|\nabla w_\sigma\cdot\vec{n}\|_{H^{-\frac{1}{2}}(\partial D_+\cap\{x\in\mathbb{R}^2:\:x_2=0\})}\leqslant C.
\end{align*}
Define  function $w_1$ as
\begin{align*}
\begin{cases}
w_1=w_\sigma,\quad x\in D_+\\
w_1|_{\partial D_+\cap\{x\in\mathbb{R}^2:\:x_2=0\}}=\nabla w_\sigma\cdot\vec{n}|_{\partial D_+\cap\{x\in\mathbb{R}^2:\:x_2=0\}}.
\end{cases}
\end{align*}
Since $w_\sigma\in H^1(D_+)$, one has $w_1\in L^2(D_+)$. Extending  $w_1$ by zero:
\begin{align*}
w_2=\begin{cases}
w_1\quad &\text{in}\ \overline {D}_+,\\
0\quad &\text{in}\ \mathbb{R}^2\setminus\overline{D}_+,
\end{cases}
\end{align*}
 yields  $w_2\in L^2(\mathbb{R}^2)$.
According to \cite[Theorem 2.41]{MR3062966}, one has
\begin{align*}
\left\|\nabla w_\sigma\cdot\vec{n}\right\|_{H^{-\frac{1}{2}}(\partial D_+\cap\{x\in\mathbb{R}^2:\:x_2=0\})}&=\|w_1\|_{H^{-\frac{1}{2}}(\partial D_+\cap\{x\in\mathbb{R}^2:\:x_2=0\})}\\
&=\|w_2\|_{H^{-\frac{1}{2}}(\{x\in\mathbb{R}^2:\:x_2=0\})}\\
&\leqslant C\|w_2\|_{H^0(\mathbb{R}^2)}\\
&=C\|w_1\|_{L^2(D_+)}.
\end{align*}
Considering that $w_1\in L^2(D_+)$, then
\begin{align*}
\|\nabla w_\sigma\cdot\vec{n}\|_{H^{-\frac{1}{2}}(\partial D_+\cap\{x\in\mathbb{R}^2:\:x_2=0\})}\leqslant C.
\end{align*}
Given  $\varphi$ and $u_\sigma\in H^1(D_+)$, according to the trace theorem, we have
\begin{align*}
\left|\sum_{i,j}\int_{\partial D_+\cap\{x\in\mathbb{R}^2:\:x_2=0\}}(w_\sigma)_{x_j}n_i\varphi {\rm d}s\right|
&+\int_{D_+}(|Du_\sigma|^2+|u_\sigma|^2){\rm d}x\\
\leqslant&C\left(\|\nabla w_\sigma\cdot\vec{n}\|_{H^{-\frac{1}{2}}(\partial D_+\cap\{x\in\mathbb{R}^2:\:x_2=0\})}^2+\|\varphi\|_{H^1(D_+)}^2\right.\\
&\left.+\|u_\sigma\|_{H^1(D_+)}^2\right)\\
\leqslant&C.
\end{align*}
Hence,
\begin{align*}
\int_{B_+(0,r)}|D_1^hDu_\sigma|^2{\rm d}x
\leqslant&C\left(\left|\sum_{i,j}\int_{\partial D_+\cap\{x\in\mathbb{R}^2:\:x_2=0\}}(w_\sigma)_{x_j}n_i\varphi {\rm d}s\right|
+\int_{D_+}(|Du_\sigma|^2+|u_\sigma|^2){\rm d}x\right)\\
\leqslant&C.
\end{align*}
By \cite[Chapter 5 Theorem 3]{MR2597943}, we conclude that $(u_\sigma)_{x_1}\in H^1(B_+(0,r))$ with
\begin{align}\label{z15}
\begin{split}
\sum_{i,j}\|(u_\sigma)_{x_ix_j}\|&_{L^2(B_+(0,r))}\\
\leqslant&C\left(\|\nabla w_\sigma\cdot\vec{n}\|_{H^{-\frac{1}{2}}(\partial D_+\cap\{x\in\mathbb{R}^2:\:x_2=0\})}+\|\varphi\|_{H^1(D_+)}+|\|u_\sigma\|_{H^1(D_+)}\right).
\end{split}
\end{align}

\textbf {Step 2.} We prove the boundedness of  $\|(u_\sigma)_{x_2x_2}\|_{L^2(B_+(0,r))}$.

From the governing equation
\[(\nabla\cdot\left(\varepsilon_1^{-1}\Psi^*_+\nabla u_\sigma\right)+\omega^2\mu_1u_\sigma)J_+=0,\]
a direct  calculation yields
\begin{align}\label{z16}
\begin{split}
\varepsilon_1^{-1}J_+&(\Psi_+^*)_{22}(u_\sigma)_{x_2x_2}\\&=-\sum_{i,j}\varepsilon_1^{-1}J_+(\Psi_+^*)_{ij}(u_\sigma)_{x_ix_j}-\sum_{i,j}\varepsilon_1^{-1}J_+((\Psi_+^*)_{ij})_{x_i}(u_\sigma)_{x_j}-J_+\omega^2\mu_1u_\sigma.
\end{split}
\end{align}
Since $(\Psi_+^*)_{22}$ and $J_+$ are uniformly positive  bounded functions, and $\varepsilon_1>0$, for \eqref{z16}, we have
\begin{align*}
|(u_\sigma)_{x_2x_2}|\leqslant C\left(\sum_{i,j}|(u_\sigma)_{x_ix_j}|+|Du_\sigma|+|u_\sigma|\right),\quad x\in D_+.
\end{align*}
Thus,
\begin{align*}
\|(u_\sigma)_{x_2x_2}\|_{L^2(B_+(0,r))}\leqslant&C\left(\sum_{i,j}\|(u_\sigma)_{x_ix_j}\|_{L^2(D_+)}+\|u_\sigma\|_{H^1(D_+)}\right).
\end{align*}
Since $u_\sigma\in H^1(D_+)$, as stated in inequality \eqref{z15}, we conclude
\begin{align*}
\|(u_\sigma)_{x_2x_2}\|_{L^2(B_+(0,r))}\leqslant C.
\end{align*}
Hence  $u_\sigma\in H^2(B_+(0,r))$. Analogously, it can be verified that $w_\sigma\in H^2(B_+(0,r))$.
\end{proof}
\section{preliminary work}\label{3}
In this section, we present several lemmas required for proving the main  theorem \ref{TM}.
When proving Theorem \ref{TM}, we need to perform a near-boundary estimate in the vicinity of the grating $S$. Additionally, due to the sign change of the equation coefficient across the grating, it is necessary to obtain norm estimates for the solutions to the boundary value problem \eqref{c1} in the regions $\Omega_1$ and $\Omega_2$, respectively. First, by applying  Holmgren's uniqueness theorem, we derive a norm estimate for the solution to the boundary value problem \eqref{c1} in the negative-index medium region.
\begin{lemm}\label{a8}
Let $u_\sigma\in H_{qp}^1(\Omega_2)$ satisfies
\begin{align*}
\begin{cases}
\nabla\cdot\left(\left(\varepsilon_2+\dfrac{{\rm i}\sigma}{\omega}\right)^{-1}\nabla u_\sigma\right)+\omega^2\mu_2u_\sigma+{\rm i}\sigma u_\sigma=0&\text{  in}\ \Omega_2,\\
\partial_{x_2}u_\sigma=0&\text{on}\ \Gamma,\\
\partial_\nu u_\sigma\in H^{-\frac{1}{2}}_{qp}(S)&\text{  on}\ S,
\end{cases}
\end{align*}
then
\begin{align*}
\|u_\sigma\|_{H^1\left(\Omega_2\right)}&\leqslant C\left(\|\partial_\nu u_{\sigma}\|_{H^{-\frac{1}{2}}\left(S\right)}+\|u_{\sigma}\|_{H^{\frac{1}{2}}\left(S\right)}\right),
\end{align*}
where $C$ is a positive constant  independent of $\sigma$.
\end{lemm}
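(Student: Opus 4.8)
The plan is to combine an energy identity with a compactness-and-uniqueness argument, using Holmgren's theorem to exclude nontrivial limits. Write $a_\sigma := (\varepsilon_2 + \mathrm i\sigma/\omega)^{-1}$, so that the equation reads $\nabla\cdot(a_\sigma\nabla u_\sigma) + (\omega^2\mu_2 + \mathrm i\sigma)u_\sigma = 0$ in $\Omega_2$; here $a_\sigma$ is a nonzero complex constant with $\operatorname{Re} a_\sigma = \varepsilon_2/(\varepsilon_2^2 + \sigma^2/\omega^2) < 0$, while $|a_\sigma|\le 1/|\varepsilon_2|$ and $-\operatorname{Re} a_\sigma$ stays bounded below by a positive constant for $\sigma$ in the bounded range relevant to the limiting absorption principle. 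First I would test the weak formulation against $u_\sigma$. Using the homogeneous Neumann condition $\partial_{x_2}u_\sigma = 0$ on $\Gamma$ and the cancellation of the contributions from $S_0$ and $S_\Lambda$ forced by quasi-periodicity, only the integral over $S$ survives, giving
\begin{align*}
a_\sigma\int_{\Omega_2}|\nabla u_\sigma|^2\,\mathrm dx = \int_S a_\sigma(\partial_\nu u_\sigma)\overline{u_\sigma}\,\mathrm ds + \int_{\Omega_2}(\omega^2\mu_2 + \mathrm i\sigma)|u_\sigma|^2\,\mathrm dx .
\end{align*}
Taking real parts and using $\operatorname{Re} a_\sigma<0$, $\mu_2<0$, together with the duality bound $\bigl|\int_S a_\sigma(\partial_\nu u_\sigma)\overline{u_\sigma}\,\mathrm ds\bigr|\le |a_\sigma|\,\|\partial_\nu u_\sigma\|_{H^{-1/2}(S)}\|u_\sigma\|_{H^{1/2}(S)}$, I obtain
\begin{align*}
\|\nabla u_\sigma\|_{L^2(\Omega_2)}^2 \le C\bigl(\|\partial_\nu u_\sigma\|_{H^{-1/2}(S)}\|u_\sigma\|_{H^{1/2}(S)} + \|u_\sigma\|_{L^2(\Omega_2)}^2\bigr),
\end{align*}
with $C$ independent of $\sigma$. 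The conormal trace $\partial_\nu u_\sigma$ is well defined in $H^{-1/2}_{qp}(S)$ since $\nabla\cdot(a_\sigma\nabla u_\sigma)\in L^2(\Omega_2)$, by \cite[Theorem 2.41]{MR3062966}.

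This estimate reduces the lemma to absorbing the term $\|u_\sigma\|_{L^2(\Omega_2)}$ into the Cauchy data, which I would do by contradiction. If the asserted inequality failed uniformly, there would be parameters $\sigma_k$ and solutions $u_k := u_{\sigma_k}$ with $\|u_k\|_{H^1(\Omega_2)} = 1$ and $\|\partial_\nu u_k\|_{H^{-1/2}(S)} + \|u_k\|_{H^{1/2}(S)}\to 0$. Passing to subsequences, $\sigma_k\to\sigma_*$, $a_{\sigma_k}\to a_*$, and $u_k\rightharpoonup u$ weakly in $H^1_{qp}(\Omega_2)$, hence $u_k\to u$ strongly in $L^2(\Omega_2)$ by Rellich's theorem. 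Passing to the limit in the weak formulation — using that the $a_{\sigma_k}$ are constants, that $\nabla u_k\rightharpoonup\nabla u$ in $L^2$, that the surface term on $S$ vanishes because $\partial_\nu u_k\to 0$ in $H^{-1/2}(S)$, and that $u_k\to u$ in $L^2$ — shows that $a_*\int_{\Omega_2}\nabla u\cdot\nabla\overline v\,\mathrm dx = \int_{\Omega_2}(\omega^2\mu_2 + \mathrm i\sigma_*)u\overline v\,\mathrm dx$ for every $v\in H^1_{qp}(\Omega_2)$. Thus $u$ solves the limiting elliptic equation with the natural homogeneous condition $\partial_\nu u = 0$ on $S$, while continuity of the trace together with $u_k|_S\to 0$ in $H^{1/2}(S)$ forces $u|_S = 0$; hence $u$ carries zero Cauchy data on $S$.

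At this point Holmgren's uniqueness theorem applies: the limiting operator has constant, hence real-analytic, principal part with nonvanishing symbol $a_*|\xi|^2$, the $C^2$ curve $S$ is non-characteristic, and $u$ has vanishing Dirichlet and conormal data on $S$, so $u\equiv 0$ in a one-sided neighborhood of $S$ inside $\Omega_2$. Propagating this through the connected domain $\Omega_2$ by elliptic unique continuation gives $u\equiv 0$. Consequently $\|u_k\|_{L^2(\Omega_2)}\to\|u\|_{L^2(\Omega_2)} = 0$, and feeding this back into the energy estimate (whose right-hand side then tends to zero) yields $\|\nabla u_k\|_{L^2(\Omega_2)}\to 0$. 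Therefore $\|u_k\|_{H^1(\Omega_2)}\to 0$, contradicting $\|u_k\|_{H^1(\Omega_2)} = 1$, and the uniform bound follows. I expect the main obstacle to be the limiting step that certifies the vanishing of the \emph{full} Cauchy data of $u$ on $S$ — in particular identifying the limiting Neumann condition as the natural boundary condition of the limiting variational problem — together with the careful verification that the hypotheses of Holmgren's theorem (non-characteristicity of $S$ and analyticity of the principal coefficients) are met, so that the vanishing genuinely propagates from $S$ to all of $\Omega_2$.
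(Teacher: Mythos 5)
Your proposal is correct and follows essentially the same route as the paper: an energy identity obtained by testing against $\overline{u_\sigma}$ to control $\|\nabla u_\sigma\|_{L^2(\Omega_2)}$ by the Cauchy data on $S$ plus $\|u_\sigma\|_{L^2(\Omega_2)}$, followed by a compactness--contradiction argument in which the weak limit has vanishing Cauchy data on $S$ and is killed by Holmgren's uniqueness theorem. Your version is in fact slightly more careful than the paper's at two points --- allowing the extracted subsequence $\sigma_k$ to converge to an arbitrary limit $\sigma_*$ rather than assuming $\sigma_n\to 0$, and explicitly checking the non-characteristicity and unique-continuation hypotheses --- but these are refinements of the same argument, not a different one.
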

\begin{proof}
Multiplying both sides of the equation by $\overline{u}_\sigma$ and integrating over $\Omega_2$, we have
\begin{align*}
\int_{\Omega_2}\left(\nabla\cdot\left(\left(\varepsilon_2+\frac{{\rm i}\sigma}{\omega}\right)^{-1}\nabla u_{\sigma}\right)\overline {u}_\sigma+\omega^2\mu_2|u_{\sigma}|^2+{\rm i}\sigma|u_{\sigma}|^2\right){\rm d}x=0.
\end{align*}
Applying integration by parts and the Cauchy inequality, we get
\begin{align*}
\|\nabla u_{\sigma}\|^2_{L^2\left(\Omega_2\right)}\leqslant C\left(\|\partial_\nu u_\sigma\|^2_{H^{-\frac{1}{2}}\left(S\right)}+\|u_{\sigma}\|^2_{H^{\frac{1}{2}}\left(S\right)}+\|u_{\sigma}\|^2_{L^2\left(\Omega_2\right)}\right).
\end{align*}
Hence
\begin{align}\label{d1}
\|u_\sigma\|_{H^1\left(\Omega_2\right)}\leqslant C\left(\|\partial_\nu u_\sigma\|_{H^{-\frac{1}{2}}\left(S\right)}+\|u_{\sigma}\|_{H^{\frac{1}{2}}\left(S\right)}+\|u_{\sigma}\|_{L^2\left(\Omega_2\right)}\right).
\end{align}
We claim that
\begin{align}\label{d2}
\|u_\sigma\|_{L^2\left(\Omega_2\right)}\leqslant C\left(\|u_\sigma\|_{H^{\frac{1}{2}}\left(S\right)}+\|\partial_\nu u_\sigma\|_{H^{-\frac{1}{2}}\left(S\right)}\right).
\end{align}
If \eqref{d2} is not true, suppose there exists a sequence $\{\sigma_n\}$ with $\sigma_n\rightarrow 0$ as $n\rightarrow\infty$, such that
\begin{align}\label{a3}
\begin{split}
\|u_{\sigma_n}\|_{L^2\left(\Omega_2\right)}=1,\quad \|u_{\sigma_n}\|_{H^{\frac{1}{2}}\left(S\right)}+\|\partial_\nu u_{\sigma_n}\|_{H^{-\frac{1}{2}}\left(S\right)}<\frac{1}{n},
\end{split}
\end{align}
where $u_{\sigma_n}$ satisfies
\begin{align*}
\begin{cases}
\nabla \cdot\left(\left(\varepsilon_2+\dfrac{{\rm i}\sigma_n}{\omega}\right)^{-1} \nabla u_{\sigma_n}\right)+\omega^2 \mu_2 u_{\sigma_n}+{\rm i}\sigma_nu_{\sigma_n}=0 & \text { in } \Omega_2,\\
\partial_{x_2}u_{\sigma_n}=0 & \text { on } \Gamma,\\
\partial_\nu u_{\sigma_n}\in H^{-\frac{1}{2}}_{qp}(S)& \text { on } S.
\end{cases}
\end{align*}
Multiplying both sides of the equation by $\overline{u}_{\sigma_n}$ and integrating over $\Omega_2$, one has
\begin{align*}
\int_{\Omega_2}\left(\nabla\cdot\left(\left(\varepsilon_2+\dfrac{{\rm i}\sigma_n}{\omega}\right)^{-1}\nabla u_{\sigma_n}\right)\overline{u}_{\sigma_n}+\omega^2\mu_2|u_{\sigma_n}|^2+{\rm i}\sigma_n|u_{\sigma_n}|^2\right){\rm d}x=0.
\end{align*}
Using integration by parts and taking the real part, we have
\begin{align*}
\int_{\Omega_2}|\nabla u_{\sigma_n}|^2{\rm d}x&=\operatorname{Re}\int_S\partial_\nu u_{\sigma_n}\overline{u}_{\sigma_n}{\rm d}s+\int_{\Omega_2}\left(\omega^2\varepsilon_2\mu_2-\frac{\sigma_n^2}{\omega}\right)|u_{\sigma_n}|^2{\rm d}x\\
&\leqslant\left|\int_S\partial_\nu u_{\sigma_n}\overline{u}_{\sigma_n}{\rm d}s\right|+\int_{\Omega_2}\omega^2\varepsilon_2\mu_2|u_{\sigma_n}|^2{\rm d}x.
\end{align*}
Thus
\begin{align*}
\|\nabla u_{\sigma_n}\|^2_{L^2\left(\Omega_2\right)}\leqslant C\left(\|\partial_\nu u_{\sigma_n}\|^2_{H^{-\frac{1}{2}}\left(S\right)}+\|u_{\sigma_n}\|^2_{H^{\frac{1}{2}}\left(S\right)}+\|u_{\sigma_n}\|^2_{L^2\left(\Omega_2\right)}\right).
\end{align*}
According to \eqref{a3}, we can deduce that $\{u_{\sigma_n}\}$, up to a subsequence, converges weakly to $u$ in $H_{qp}^1\left(\Omega_2\right)$. Furthermore, according to the Sobolev embedding theorem, it can be concluded that $\{u_{\sigma_n}\}$ converges strongly to $u$ in $L^2_{qp}\left(\Omega_2\right)$, where $u$ satisfies
\begin{align*}
\begin{cases}
\nabla \cdot\left(\varepsilon_2^{-1} \nabla u\right)+\omega^2 \mu_2 u=0 & \text { in } \Omega_2,\\
\partial_{x_2}u=0 & \text { on } \Gamma,\\
\partial_{\nu} u=0 & \text { on } S,
\end{cases}
\end{align*}
and $u=0$ on $S$. According to  Holmgren's uniqueness theorem \cite[P83]{MR2986407}, we have $u=0$ in $\Omega_2$, which  contradicts  \eqref{a3}. Therefore, by \eqref{d1} and \eqref{d2}, one has
\begin{align*}
\|u_\sigma\|_{H^1\left(\Omega_2\right)}&\leqslant C\left(\|\partial_\nu u_{\sigma}\|_{H^{-\frac{1}{2}}\left(S\right)}+\|u_{\sigma}\|_{H^{\frac{1}{2}}\left(S\right)}\right).
\end{align*}
\end{proof}
We adopt an approach similar to that in the proof of Lemma \ref{a8} to derive a norm estimate for the solution of the boundary value problem \eqref{c1} in the domain $\Omega_1$. The detailed proof is omitted.
\begin{lemm}\label{a9}
Let $u_\sigma\in H_{qp}^1\left(\Omega_1\right)$ satisfies
\begin{align*}
\begin{cases}
\nabla \cdot\left(\varepsilon_1^{-1} \nabla u_\sigma\right)+\omega^2 \mu_1u_\sigma=0& \text { in } \Omega_1,\\
\partial_{x_2} u_\sigma=Tu_\sigma+g& \text { on } \Gamma_0,\\
\partial_\nu u_\sigma\in H^{-\frac{1}{2}}_{qp}(S)& \text { on } S.
\end{cases}
\end{align*}
Then
\begin{align*}
\|u_\sigma\|_{H^1\left(\Omega_1\right)}\leqslant C\left(\|u_\sigma\|_{H^{\frac{1}{2}}(S)}+\|\partial_\nu u_\sigma\|_{H^{-\frac{1}{2}}\left(S\right)}+\|g\|_{H^{-\frac{1}{2}}\left(\Gamma_0\right)}+\|u_\sigma\|_{H^{\frac{1}{2}}\left(\Gamma_0\right)}\right),
\end{align*}
where $C$ is a positive constant  independent of $\sigma$ and $g$.
\end{lemm}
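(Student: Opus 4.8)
The plan is to follow the architecture of the proof of Lemma \ref{a8} line by line: first derive an energy estimate that controls $\|u_\sigma\|_{H^1(\Omega_1)}$ up to a lower-order term $\|u_\sigma\|_{L^2(\Omega_1)}$, and then remove that term by a compactness-plus-unique-continuation argument. The genuinely new ingredient, absent in Lemma \ref{a8}, is the transparent boundary condition $\partial_{x_2}u_\sigma=Tu_\sigma+g$ on $\Gamma_0$, which replaces the homogeneous Neumann datum on $\Gamma$; so the main work is to show that this term contributes only the admissible quantities $\|g\|_{H^{-1/2}(\Gamma_0)}$ and $\|u_\sigma\|_{H^{1/2}(\Gamma_0)}$.

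First I would multiply $\nabla\cdot(\varepsilon_1^{-1}\nabla u_\sigma)+\omega^2\mu_1 u_\sigma=0$ by $\overline{u}_\sigma$, integrate over $\Omega_1$, and integrate by parts. By the quasi-periodicity of $u_\sigma$ the contributions on the vertical sides $S_0$ and $S_\Lambda$ cancel, and the outward normal on $\Gamma_0$ is $e_2$, so $\partial_\nu u_\sigma=Tu_\sigma+g$ there, while on $S$ the datum $\partial_\nu u_\sigma\in H^{-\frac12}(S)$ is prescribed. Taking real parts yields
\begin{align*}
\varepsilon_1^{-1}\int_{\Omega_1}|\nabla u_\sigma|^2\,{\rm d}x
&=\varepsilon_1^{-1}\operatorname{Re}\langle Tu_\sigma,u_\sigma\rangle_{\Gamma_0}+\varepsilon_1^{-1}\operatorname{Re}\langle g,u_\sigma\rangle_{\Gamma_0}\\
&\quad+\varepsilon_1^{-1}\operatorname{Re}\langle\partial_\nu u_\sigma,u_\sigma\rangle_S+\omega^2\mu_1\|u_\sigma\|_{L^2(\Omega_1)}^2.
\end{align*}

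Next I would dispose of the $\Gamma_0$ term. Using the definition \eqref{a0}, $\langle Tu_\sigma,u_\sigma\rangle_{\Gamma_0}=\Lambda\sum_{n}{\rm i}\beta_n|u_n|^2$, whose real part is
\[
\operatorname{Re}\langle Tu_\sigma,u_\sigma\rangle_{\Gamma_0}=-\Lambda\sum_{|\alpha_n|>\kappa_1}(\alpha_n^2-\kappa_1^2)^{1/2}|u_n|^2\le 0,
\]
since propagating modes give a purely imaginary contribution and evanescent modes a negative real one; hence this term may simply be discarded in the upper bound (equivalently, one bounds it by $C\|u_\sigma\|_{H^{1/2}(\Gamma_0)}^2$ using the boundedness of $T\colon H^{1/2}(\Gamma_0)\to H^{-1/2}(\Gamma_0)$). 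Estimating the remaining three terms by duality of the trace spaces and Young's inequality then produces the analogue of \eqref{d1},
\begin{align*}
\|u_\sigma\|_{H^1(\Omega_1)}\le C\big(&\|u_\sigma\|_{H^{1/2}(S)}+\|\partial_\nu u_\sigma\|_{H^{-1/2}(S)}\\
&+\|g\|_{H^{-1/2}(\Gamma_0)}+\|u_\sigma\|_{H^{1/2}(\Gamma_0)}+\|u_\sigma\|_{L^2(\Omega_1)}\big).
\end{align*}

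Finally I would absorb the term $\|u_\sigma\|_{L^2(\Omega_1)}$ by contradiction, exactly as in Lemma \ref{a8}. If the asserted estimate (with that term deleted) failed, there would be $\sigma_n\to 0$ and corresponding solutions with $\|u_{\sigma_n}\|_{L^2(\Omega_1)}=1$ while the four data norms tend to $0$; the bound above makes $\{u_{\sigma_n}\}$ bounded in $H^1_{qp}(\Omega_1)$, so after passing to a subsequence it converges weakly in $H^1$ and, by Rellich, strongly in $L^2(\Omega_1)$ to a limit $u$ with $\|u\|_{L^2(\Omega_1)}=1$. Crucially, the governing operator in $\Omega_1$ carries no $\sigma$-dependence, so $u$ solves $\nabla\cdot(\varepsilon_1^{-1}\nabla u)+\omega^2\mu_1 u=0$ with vanishing Cauchy data $u|_S=0$ and $\partial_\nu u|_S=0$. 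Since the coefficients are constant, hence real-analytic, and $S\in C^2$ is non-characteristic for the elliptic operator, Holmgren's uniqueness theorem forces $u\equiv 0$ in a neighborhood of $S$, and analyticity of solutions then propagates this to the whole connected domain $\Omega_1$, contradicting $\|u\|_{L^2(\Omega_1)}=1$. The main obstacle is precisely this last step: one must verify that the weak limit inherits the vanishing Cauchy data in the correct trace spaces and that Holmgren's theorem applies globally on $\Omega_1$. I expect the $\sigma$-uniformity of $C$ to be the easiest point here, because $\sigma$ enters only implicitly through the transmission data on $S$ and never through the equation in $\Omega_1$; note also that the vanishing Cauchy data on $S$ alone kills the limit, so—unlike the global well-posedness—no spectral restriction on $\omega$ is needed for this estimate.
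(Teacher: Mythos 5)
Your proposal is correct and follows exactly the route the paper intends: the paper omits the proof of Lemma \ref{a9}, stating only that it is ``similar to that in the proof of Lemma \ref{a8}'', and your argument is precisely that adaptation --- the energy identity giving the analogue of \eqref{d1} with the extra boundary term on $\Gamma_0$ controlled via $\operatorname{Re}\langle Tu_\sigma,u_\sigma\rangle_{\Gamma_0}\leqslant 0$ (or the continuity of $T:H^{1/2}_{qp}(\Gamma_0)\to H^{-1/2}_{qp}(\Gamma_0)$), followed by the compactness-and-Holmgren contradiction to remove $\|u_\sigma\|_{L^2(\Omega_1)}$. Your observations that the operator in $\Omega_1$ is $\sigma$-independent (so the uniformity of $C$ in $\sigma$ is automatic) and that no spectral restriction on $\omega$ is needed here are both accurate and consistent with the paper's treatment of Lemma \ref{a8}.
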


Using the variational method, we have obtained an a priori estimate for the solution of the boundary value problem \eqref{c1} in the region $\Omega$. Through computation, we find that $\sigma$ appears in the denominator, which makes it impossible to employ a method similar to that used in the proofs of Lemmas \ref{a8} and \ref{a9}.
\begin{lemm}\label{a12}
Assuming that $u_\sigma\in H_{qp}^1\left(\Omega\right)$ satisfies the boundary value problem \eqref{c1} for fixed $\sigma>0$, we obtain
\begin{align*}
\|u_\sigma\|_{H^1\left(\Omega\right)}\leqslant C\left(\frac{1}{\sigma}\left(\|g\|_{H^{-\frac{1}{2}}\left(\Gamma_0\right)}+\|u_\sigma\|_{H^{\frac{1}{2}}\left(\Gamma_0\right)}\right)+\|u_\sigma\|_{L^2(\Omega)}\right),
\end{align*}
where $C$ is a positive constant independent of $\sigma$ and $g$.
\end{lemm}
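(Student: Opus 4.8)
The plan is to run the variational (energy) method directly on \eqref{c1}, testing against $\overline{u}_\sigma$, and then to extract complementary information from the real and imaginary parts of the resulting identity. First I would multiply the equation by $\overline{u}_\sigma$, integrate over $\Omega$, and integrate by parts. Because $u_\sigma$ and the flux $\varepsilon_\sigma^{-1}\nabla u_\sigma\cdot\nu$ are continuous across $S$, no interface term survives; the contributions on the vertical sides $S_0,S_\Lambda$ cancel by quasi-periodicity; the term on $\Gamma$ vanishes since $\partial_{x_2}u_\sigma=0$; and on $\Gamma_0$ one has $\varepsilon_\sigma^{-1}=\varepsilon_1^{-1}$ and $\partial_{x_2}u_\sigma=Tu_\sigma+g$. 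This yields the identity
\[
-\int_\Omega \varepsilon_\sigma^{-1}|\nabla u_\sigma|^2\,{\rm d}x+\varepsilon_1^{-1}\int_{\Gamma_0}(Tu_\sigma+g)\overline{u}_\sigma\,{\rm d}s+\int_\Omega \omega^2\mu|u_\sigma|^2\,{\rm d}x+{\rm i}\sigma\int_{\Omega_2}|u_\sigma|^2\,{\rm d}x=0.
\]

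Next I would take imaginary parts. Writing $\varepsilon_\sigma^{-1}=(\varepsilon_2-{\rm i}\sigma/\omega)/(\varepsilon_2^2+(\sigma/\omega)^2)$ on $\Omega_2$, the imaginary part of $-\int_\Omega\varepsilon_\sigma^{-1}|\nabla u_\sigma|^2$ equals $\tfrac{\sigma/\omega}{\varepsilon_2^2+(\sigma/\omega)^2}\int_{\Omega_2}|\nabla u_\sigma|^2$. The key structural fact is that $\operatorname{Im}\langle Tu_\sigma,u_\sigma\rangle_{\Gamma_0}=\Lambda\sum_{|\alpha_n|<\kappa_1}\beta_n|u_{\sigma,n}|^2\geqslant 0$, so the DtN contribution carries a favorable sign and may be discarded. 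Hence
\[
\frac{\sigma/\omega}{\varepsilon_2^2+(\sigma/\omega)^2}\int_{\Omega_2}|\nabla u_\sigma|^2\,{\rm d}x+\sigma\int_{\Omega_2}|u_\sigma|^2\,{\rm d}x\leqslant \varepsilon_1^{-1}\bigl|\operatorname{Im}\langle g,u_\sigma\rangle_{\Gamma_0}\bigr|\leqslant C\|g\|_{H^{-1/2}(\Gamma_0)}\|u_\sigma\|_{H^{1/2}(\Gamma_0)}.
\]
Dividing by the coefficient $\tfrac{\sigma/\omega}{\varepsilon_2^2+(\sigma/\omega)^2}$, whose reciprocal is of order $1/\sigma$ for small $\sigma$, controls $\int_{\Omega_2}|\nabla u_\sigma|^2$ by $\tfrac{C}{\sigma}$ times a product of boundary norms; this is exactly where $\sigma$ enters the denominator.

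Then I would take real parts to recover the gradient on $\Omega_1$. Since $\operatorname{Re}(\varepsilon_\sigma^{-1})=\varepsilon_2/(\varepsilon_2^2+(\sigma/\omega)^2)<0$ on $\Omega_2$, the real part of the identity reads
\[
\varepsilon_1^{-1}\int_{\Omega_1}|\nabla u_\sigma|^2\,{\rm d}x=\frac{|\varepsilon_2|}{\varepsilon_2^2+(\sigma/\omega)^2}\int_{\Omega_2}|\nabla u_\sigma|^2\,{\rm d}x+\varepsilon_1^{-1}\operatorname{Re}\langle Tu_\sigma+g,u_\sigma\rangle_{\Gamma_0}+\int_\Omega\omega^2\mu|u_\sigma|^2\,{\rm d}x.
\]
The $\Omega_2$-gradient term is rewritten as $\tfrac{|\varepsilon_2|}{\sigma/\omega}$ times the quantity already bounded in the imaginary-part step, so it inherits the $1/\sigma$ control; the boundary term is estimated via the continuity of $T\colon H^{1/2}_{qp}(\Gamma_0)\to H^{-1/2}_{qp}(\Gamma_0)$ together with the Cauchy--Schwarz and trace inequalities; and the last term is absorbed into $\|u_\sigma\|_{L^2(\Omega)}^2$. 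Combining the two bounds and applying Young's inequality to products such as $\|g\|_{H^{-1/2}(\Gamma_0)}\|u_\sigma\|_{H^{1/2}(\Gamma_0)}$ gives the claimed control of $\|\nabla u_\sigma\|_{L^2(\Omega)}$, and adding $\|u_\sigma\|_{L^2(\Omega)}$ produces the full $H^1$ estimate.

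The main obstacle is the sign reversal of $\varepsilon_\sigma$: unlike Lemmas \ref{a8} and \ref{a9}, the real part of the identity is \emph{not} coercive, the $\Omega_1$ and $\Omega_2$ gradient terms entering with opposite signs, so $\|\nabla u_\sigma\|_{L^2(\Omega)}$ cannot be bounded in a single step. The only coercive information is the degenerate imaginary-part inequality, whose coefficient $\tfrac{\sigma/\omega}{\varepsilon_2^2+(\sigma/\omega)^2}$ vanishes as $\sigma\to 0$; this degeneracy forces the factor $1/\sigma$ and is precisely the difficulty flagged before the statement. Care is also required to verify the sign of $\operatorname{Im}\langle Tu_\sigma,u_\sigma\rangle_{\Gamma_0}$, so that the nonlocal DtN contribution can be dropped rather than estimated.
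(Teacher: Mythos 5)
Your proposal is correct and follows essentially the same route as the paper's proof: test against $\overline{u}_\sigma$, use the imaginary part of the resulting identity to bound $\|\nabla u_\sigma\|_{L^2(\Omega_2)}^2$ by $\tfrac{C}{\sigma}\bigl(\|g\|_{H^{-1/2}(\Gamma_0)}^2+\|u_\sigma\|_{H^{1/2}(\Gamma_0)}^2\bigr)$, then use the real part (with $\varepsilon_2<0$) to control $\|\nabla u_\sigma\|_{L^2(\Omega_1)}$. The only cosmetic difference is that you discard the DtN contribution via the sign of $\operatorname{Im}\langle Tu_\sigma,u_\sigma\rangle_{\Gamma_0}$, whereas the paper simply bounds it using the continuity of $T\colon H^{1/2}_{qp}(\Gamma_0)\to H^{-1/2}_{qp}(\Gamma_0)$; both lead to the same estimate.
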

\begin{proof}
Multiplying both sides of the equation by $\overline{u}_\sigma$ and then integrating over $\Omega$, we obtain
\begin{align*}
\int_\Omega\left(\nabla\cdot\left(\varepsilon_\sigma^{-1}\nabla u_\sigma\right)\overline{u}_\sigma+\omega^2\mu |u_\sigma|^2\right){\rm d}x+\int_{\Omega_2}{\rm i}\sigma|u_\sigma|^2{\rm d}x=0.
\end{align*}
Applying integration by parts, we derive
\begin{align}\label{z21}
\begin{split}
\int_{\Omega_2}\left(\varepsilon_2+\dfrac{{\rm i}\sigma}{\omega}\right)^{-1}|\nabla u_\sigma|^2{\rm d}x
+\int_{\Omega_1}\varepsilon_1^{-1}|\nabla u_\sigma|^2{\rm d}x
=&\int_{\Gamma_0}\varepsilon_1^{-1}\left(Tu_\sigma+g\right)\overline{u}_\sigma {\rm d}s+\int_\Omega\omega^2\mu|u_\sigma|^2{\rm d}x\\
&+\int_{\Omega_2}{\rm i}\sigma|u_\sigma|^2{\rm d}x.
\end{split}
\end{align}
Considering the imaginary part of \eqref{z21},  we have
\begin{align*}
-\int_{\Omega_2}\dfrac{\sigma/\omega}{\varepsilon_2^2+(\sigma/\omega)^2}|\nabla u_\sigma|^2{\rm d}x
=\operatorname{Im}\int_{\Gamma_0}\varepsilon_1^{-1}\left(Tu_\sigma+g\right)\overline{u}_\sigma {\rm d}s+\int_{\Omega_2}\sigma|u_\sigma|^2{\rm d}x.
\end{align*}
Rearranging terms, we obtain
\begin{align*}
\int_{\Omega_2}\dfrac{\sigma/\omega}{\varepsilon_2^2+(\sigma/\omega)^2}|\nabla u_\sigma|^2{\rm d}x
=&-\operatorname{Im}\int_{\Gamma_0}\varepsilon_1^{-1}\left(Tu_\sigma+g\right)\overline{u}_\sigma {\rm d}s-\int_{\Omega_2}\sigma|u_\sigma|^2{\rm d}x\\
\leqslant&\left|\int_{\Gamma_0}\varepsilon_1^{-1}\left(Tu_\sigma+g\right)\overline{u}_\sigma {\rm d}s\right|.
\end{align*}
Since $T: H_{qp}^{\frac{1}{2}}(\Gamma_0)\to H_{qp}^{-\frac{1}{2}}(\Gamma_0)$ is a continuous operator, it follows that
\begin{align}\label{a5}
\|\nabla u_\sigma\|_{L^2(\Omega_2)}^2\leqslant \frac{C}{\sigma}\left(\|g\|^2_{H^{-\frac{1}{2}}\left(\Gamma_0\right)}+\|u_\sigma\|^2_{H^{\frac{1}{2}}\left(\Gamma_0\right)}\right).
\end{align}
Considering the real part of \eqref{z21}, we derive
\begin{align*}
\int_{\Omega_2}\dfrac{\varepsilon_2}{\varepsilon_2^2+(\sigma/\omega)^2}|\nabla u_\sigma|^2{\rm d}x+\int_{\Omega_1}\varepsilon_1^{-1}|\nabla u_\sigma|^2{\rm d}x
=\operatorname{Re}\int_{\Gamma_0}\varepsilon_1^{-1}\left(Tu_\sigma+g\right)\overline{u}_\sigma {\rm d}s+\int_\Omega\omega^2\mu|u_\sigma|^2{\rm d}x.
\end{align*}
Since $\mu_2<0$, by \eqref{a5}, we can deduce
\begin{align*}
\begin{split}
\int_{\Omega_1}\varepsilon_1^{-1}|\nabla u_\sigma|^2{\rm d}x
&\leqslant\left|\int_{\Gamma_0}\varepsilon_1^{-1}\left(Tu_\sigma+g\right)\overline{u}_\sigma {\rm d}s\right|
+\int_{\Omega_1}\omega^2\mu_1|u_\sigma|^2{\rm d}x
-\int_{\Omega_2}\dfrac{\varepsilon_2}{\varepsilon_2^2+(\sigma/\omega)^2}|\nabla u_\sigma|^2{\rm d}x\\
&\leqslant\frac{C}{\sigma}\left(\|g\|^2_{H^{-\frac{1}{2}}\left(\Gamma_0\right)}
+\|u_\sigma\|^2_{H^{\frac{1}{2}}\left(\Gamma_0\right)}\right)+C\|u_\sigma\|^2_{L^2(\Omega_1)}.
\end{split}
\end{align*}
This implies
\begin{align*}
\|\nabla u_\sigma\|_{L^2(\Omega_1)}\leqslant C\left(\dfrac{1}{\sigma}\left(\|g\|_{H^{-\frac{1}{2}}(\Gamma_0)}+\|u_\sigma\|_{H^{\frac{1}{2}}(\Gamma_0)}\right)+\|u_\sigma\|_{L^2(\Omega_1)}\right).
\end{align*}
Therefore,
\begin{align*}
\|u_\sigma\|_{H^1(\Omega_1)}\leqslant C\left(\dfrac{1}{\sigma}\left(\|g\|_{H^{-\frac{1}{2}}(\Gamma_0)}
+\|u_\sigma\|_{H^{\frac{1}{2}}(\Gamma_0)}\right)+\|u_\sigma\|_{L^2(\Omega_1)}\right).
\end{align*}
Based on the property of norms, we have
\begin{align*}
\|u_\sigma\|_{H^1\left(\Omega\right)}\leqslant C\left(\frac{1}{\sigma}\left(\|g\|_{H^{-\frac{1}{2}}\left(\Gamma_0\right)}+\|u_\sigma\|_{H^{\frac{1}{2}}\left(\Gamma_0\right)}\right)+\|u_\sigma\|_{L^2(\Omega)}\right).
\end{align*}
\end{proof}
\section{proof of theorem \ref{TM}}\label{4}
The main theorem of this paper is proved as follows.
\begin{proof}
We will prove this theorem in two steps.

\textbf {Step 1.}
Using the Fredholm alternative theorem, we prove that the boundary value problem \eqref{c1} admits a unique solution $u_\sigma$ for all $\omega\in\mathbb{R}_+\setminus G$.

For the boundary value problem \eqref{c1}, we deduce the corresponding  variational problem: find $u\in H^1_{qp}(\Omega)$ such that
\begin{align*}
a(u,v)=\langle\varepsilon_1^{-1}g,v\rangle_{\Gamma_0},\quad \forall v\in H^1_{qp}(\Omega),
\end{align*}
where the sesquilinear form
\begin{align*}
a(u,v)=\langle\varepsilon_\sigma^{-1}\nabla u,\nabla v\rangle_{\Omega}-\langle\omega^2\mu u,v\rangle_{\Omega}-\langle{\rm i}\sigma u,v\rangle_{\Omega_2}-\langle\varepsilon_1^{-1}Tu,v\rangle_{\Gamma_0}\triangleq b(u,v)+c(u,v),
\end{align*}
with
\begin{align*}
b(u,v)=\langle\varepsilon_\sigma^{-1}\nabla u,\nabla v\rangle_{\Omega}-\langle\varepsilon_1^{-1}Tu,v\rangle_{\Gamma_0},\quad c(u,v)=-\langle\omega^2\mu u,v\rangle_{\Omega}-\langle{\rm i}\sigma u,v\rangle_{\Omega_2}.
\end{align*}
From the inequality
\begin{align*}
|a(u,v)|=|b(u,v)+c(u,v)|\geqslant|b(u,v)|-|c(u,v)|,
\end{align*}
it suffices to analyze $b(u,v)$. Observing that
\begin{align*}
|b(u,v)|\geqslant\operatorname{Re}b(u,v),\quad |b(u,v)|\geqslant|\operatorname{Im}b(u,v)|,
\end{align*}
 yields
\begin{align*}
|b(u,v)|\geqslant\dfrac{1}{2}\left(\operatorname{Re}b(u,v)+|\operatorname{Im}b(u,v)|\right).
\end{align*}
Thus, the key estimate reduces to controlling  $\operatorname{Re}b(u,v)+|\operatorname{Im}b(u,v)|$.

Considering the real part of $b(u_\sigma,\mathbb{T}u_\sigma)$, according to \cite[Lemma 3.4]{bao2022maxwell}, we obtain
\begin{align*}
\operatorname{Re}b(u_\sigma,\mathbb{T}u_\sigma)=&\int_{\Omega_1}\varepsilon_1^{-1}|\nabla u_\sigma|^2{\rm d}x-\int_{\Omega_2}\dfrac{\varepsilon_2}{\varepsilon_2^2+(\sigma/\omega)^2}|\nabla u_\sigma|^2{\rm d}x\\
&+2\operatorname{Re}\langle(\varepsilon_2+{\rm i}\sigma/\omega)^{-1}\nabla u_\sigma,\nabla\mathcal{R}(u_\sigma|_S)\rangle_{\Omega_2}-\operatorname{Re}\langle\varepsilon_1^{-1}Tu_\sigma,u_\sigma\rangle_{\Gamma_0}\\
\geqslant&\int_{\Omega_1}\varepsilon_1^{-1}|\nabla u_\sigma|^2{\rm d}x-\int_{\Omega_2}\dfrac{\varepsilon_2}{\varepsilon_2^2+(\sigma/\omega)^2}|\nabla u_\sigma|^2{\rm d}x\\
&-2|\operatorname{Re}\langle(\varepsilon_2+{\rm i}\sigma/\omega)^{-1}\nabla u_\sigma,\nabla\mathcal{R}(u_\sigma|_S)\rangle_{\Omega_2}|.
\end{align*}
Considering the imaginary part of $b(u_\sigma,\mathbb{T}u_\sigma)$, one has
\begin{align*}
\operatorname{Im}b(u_\sigma,\mathbb{T}u_\sigma)=&\int_{\Omega_2}\dfrac{\sigma/\omega}{\varepsilon_2^2+(\sigma/\omega)^2}|\nabla u_\sigma|^2{\rm d}x+2\operatorname{Im}\langle(\varepsilon_2+{\rm i}\sigma/\omega)^{-1}\nabla u_\sigma,\nabla\mathcal{R}(u_\sigma|_S)\rangle_{\Omega_2}\\
&-\operatorname{Im}\int_{\Gamma_0}\varepsilon_1^{-1}Tu_\sigma\overline{u}_\sigma{\rm d}s\\
=&\int_{\Omega_2}\dfrac{\sigma/\omega}{\varepsilon_2^2+(\sigma/\omega)^2}|\nabla u_\sigma|^2{\rm d}x-\int_{\Omega_2}\dfrac{2\sigma/\omega}{\varepsilon_2^2+(\sigma/\omega)^2}|\nabla u_\sigma|^2{\rm d}x-\operatorname{Im}\int_{\Gamma_0}\varepsilon_1^{-1}Tu_\sigma\overline {u}_\sigma{\rm d}s\\
=&-\int_{\Omega_2}\dfrac{\sigma/\omega}{\varepsilon_2^2+(\sigma/\omega)^2}|\nabla u_\sigma|^2{\rm d}x-\operatorname{Im}\int_{\Gamma_0}\varepsilon_1^{-1}Tu_\sigma\overline{u}_\sigma{\rm d}s.
\end{align*}
According to \cite[Lemma 3.4]{bao2022maxwell}, we derive
\begin{align*}
|\operatorname{Im}b(u_\sigma,\mathbb{T}u_\sigma)|=&\int_{\Omega_2}\dfrac{\sigma/\omega}{\varepsilon_2^2+(\sigma/\omega)^2}|\nabla u_\sigma|^2{\rm d}x+\operatorname{Im}\int_{\Gamma_0}\varepsilon_1^{-1}Tu_\sigma\overline{u}_\sigma{\rm d}s\\
\geqslant&\int_{\Omega_2}\dfrac{\sigma/\omega}{\varepsilon_2^2+(\sigma/\omega)^2}|\nabla u_\sigma|^2{\rm d}x.
\end{align*}
Hence, combining the above estimates, we can get
\begin{align}\label{TM1}
\begin{split}
\operatorname{Re}b(u_\sigma,\mathbb{T}u_\sigma)+|\operatorname{Im}b(u_\sigma,\mathbb{T}u_\sigma)|\geqslant&\int_{\Omega_1}\varepsilon_1^{-1}|\nabla u_\sigma|^2{\rm d}x+\int_{\Omega_2}\dfrac{|\varepsilon_2|}{\varepsilon_2^2+(\sigma/\omega)^2}|\nabla u_\sigma|^2{\rm d}x\\
&-2|\operatorname{Re}\langle(\varepsilon_2+{\rm i}\sigma/\omega)^{-1}\nabla u_\sigma,\nabla\mathcal{R}(u_\sigma|_S)\rangle_{\Omega_2}|\\
&+\int_{\Omega_2}\dfrac{\sigma/\omega}{\varepsilon_2^2+(\sigma/\omega)^2}|\nabla u_\sigma|^2{\rm d}x.
\end{split}
\end{align}
By the Young's inequality, we obtain
\begin{align}\label{TM2}
\begin{split}
|\operatorname{Re}&\langle(\varepsilon_2+{\rm i}\sigma/\omega)^{-1}\nabla u_\sigma,\nabla\mathcal{R}(u_\sigma|_S)\rangle_{\Omega_2}|\\
=&\left|\operatorname{Re}\dfrac{\varepsilon_2}{\varepsilon_2^2+(\sigma/\omega)^2}\langle\nabla u_\sigma,\nabla\mathcal{R}(u_\sigma|_S)\rangle_{\Omega_2}+
\operatorname{Re}\dfrac{-{\rm i}\sigma/\omega}{\varepsilon_2^2+(\sigma/\omega)^2}\langle\nabla u_\sigma,\nabla\mathcal{R}(u_\sigma|_S)\rangle_{\Omega_2}\right|\\
\leqslant&\dfrac{|\varepsilon_2|}{\varepsilon_2^2+(\sigma/\omega)^2}|\langle\nabla u_\sigma,\nabla\mathcal{R}(u_\sigma|_S)\rangle_{\Omega_2}|
+\dfrac{\sigma/\omega}{\varepsilon_2^2+(\sigma/\omega)^2}|\langle\nabla u_\sigma,\nabla\mathcal{R}(u_\sigma|_S)\rangle_{\Omega_2}|\\
\leqslant&\dfrac{|\varepsilon_2|}{\varepsilon_2^2+(\sigma/\omega)^2}\dfrac{\epsilon}{2}\langle\nabla u_\sigma,\nabla u_\sigma\rangle_{\Omega_2}+\dfrac{|\varepsilon_2|}{\varepsilon_2^2+(\sigma/\omega)^2}\dfrac{1}{2\epsilon}K\langle\varepsilon_1^{-1}\nabla u_\sigma,\nabla u_\sigma\rangle_{\Omega_1}\\
&+\dfrac{\sigma/\omega}{\varepsilon_2^2+(\sigma/\omega)^2}\dfrac{\epsilon}{2}\langle\nabla u_\sigma,\nabla u_\sigma\rangle_{\Omega_2}+\dfrac{\sigma/\omega}{\varepsilon_2^2+(\sigma/\omega)^2}\dfrac{1}{2\epsilon}K\langle\varepsilon_1^{-1}\nabla u_\sigma,\nabla u_\sigma\rangle_{\Omega_1},\quad\forall\epsilon>0.
\end{split}
\end{align}
According to \eqref{TM1} and \eqref{TM2}, we have
\begin{align*}
\operatorname{Re}&b(u_\sigma,\mathbb{T}u_\sigma)+|\operatorname{Im}b(u_\sigma,\mathbb{T}u_\sigma)|\\
\geqslant&\int_{\Omega_1}\varepsilon_1^{-1}|\nabla u_\sigma|^2{\rm d}x+\int_{\Omega_2}\dfrac{|\varepsilon_2|}{\varepsilon_2^2+(\sigma/\omega)^2}|\nabla u_\sigma|^2{\rm d}x+\int_{\Omega_2}\dfrac{\sigma/\omega}{\varepsilon_2^2+(\sigma/\omega)^2}|\nabla u_\sigma|^2{\rm d}x\\
&-\dfrac{\epsilon|\varepsilon_2|}{\varepsilon_2^2+(\sigma/\omega)^2}\langle\nabla u_\sigma,\nabla u_\sigma\rangle_{\Omega_2}-\dfrac{|\varepsilon_2|}{\varepsilon_2^2+(\sigma/\omega)^2}\dfrac{1}{\epsilon}K\langle\varepsilon_1^{-1}\nabla u_\sigma,\nabla u_\sigma\rangle_{\Omega_1}\\
&-\dfrac{\epsilon\sigma/\omega}{\varepsilon_2^2+(\sigma/\omega)^2}\langle\nabla u_\sigma,\nabla u_\sigma\rangle_{\Omega_2}-\dfrac{\sigma/\omega}{\varepsilon_2^2+(\sigma/\omega)^2}\dfrac{1}{\epsilon}K\langle\varepsilon_1^{-1}\nabla u_\sigma,\nabla u_\sigma\rangle_{\Omega_1}\\
=&\left(1+\dfrac{\sigma/\omega}{|\varepsilon_2|}-\epsilon-\dfrac{\sigma/\omega}{|\varepsilon_2|}\epsilon\right)\int_{\Omega_2}\dfrac{|\varepsilon_2|}{\varepsilon_2^2+(\sigma/\omega)^2}|\nabla u_\sigma|^2{\rm d}x\\
&+\left(1-\dfrac{|\varepsilon_2|+\sigma/\omega}{\varepsilon_2^2+(\sigma/\omega)^2}\dfrac{1}{\epsilon}K\right)\int_{\Omega_1}\varepsilon_1^{-1}|\nabla u_\sigma|^2{\rm d}x.
\end{align*}
Since $\mathop{\operatorname{ sup}}\limits_{\sigma>0}\dfrac{|\varepsilon_2|+\sigma/\omega}{\varepsilon_2^2+(\sigma/\omega)^2}K<1$ and considering the arbitrariness of $\epsilon$, we obtain
\begin{align*}
1-\dfrac{|\varepsilon_2|+\sigma/\omega}{\varepsilon_2^2+(\sigma/\omega)^2}\dfrac{1}{\epsilon}K>0,\quad 1+\dfrac{\sigma/\omega}{|\varepsilon_2|}-\epsilon-\dfrac{\sigma/\omega}{|\varepsilon_2|}\epsilon>0.
\end{align*}
It follows from the Fredholm alternative theorem that for all $\omega\in\mathbb{R}_+\setminus G$, the boundary value problem \eqref{c1} admits  a unique solution $u_\sigma$.

{\bf Step 2.} We prove that the boundary value problem \eqref{c2} admits a unique solution $u_0$ satisfying
\[\Vert u_0\Vert_{H^1(\Omega)}\leqslant C\Vert g\Vert_{H^{-\frac{1}{2}}(\Gamma_0)}.\]

For sufficiently small $\lambda>0$, define
\begin{align*}
N_\lambda:=\{x\in S:\:\bigcup_m B^{(m)}_+(x,\lambda)\},\quad N_{-\lambda}:=\{x\in S:\:\bigcup_m B^{(m)}_-(x,\lambda)\},
\end{align*}
where $B^{(m)}_+(x,\lambda)$ represents the intersection of a circle centered at $x$ with radius $\lambda$ and the region $\Omega_1$, and $B^{(m)}_-(x,\lambda)$ represents the intersection of the same circle with the region $\Omega_2$. As shown in Figure \ref{fig3},
\begin{figure}[h]
    \centering
    \includegraphics[width=0.45\textwidth]{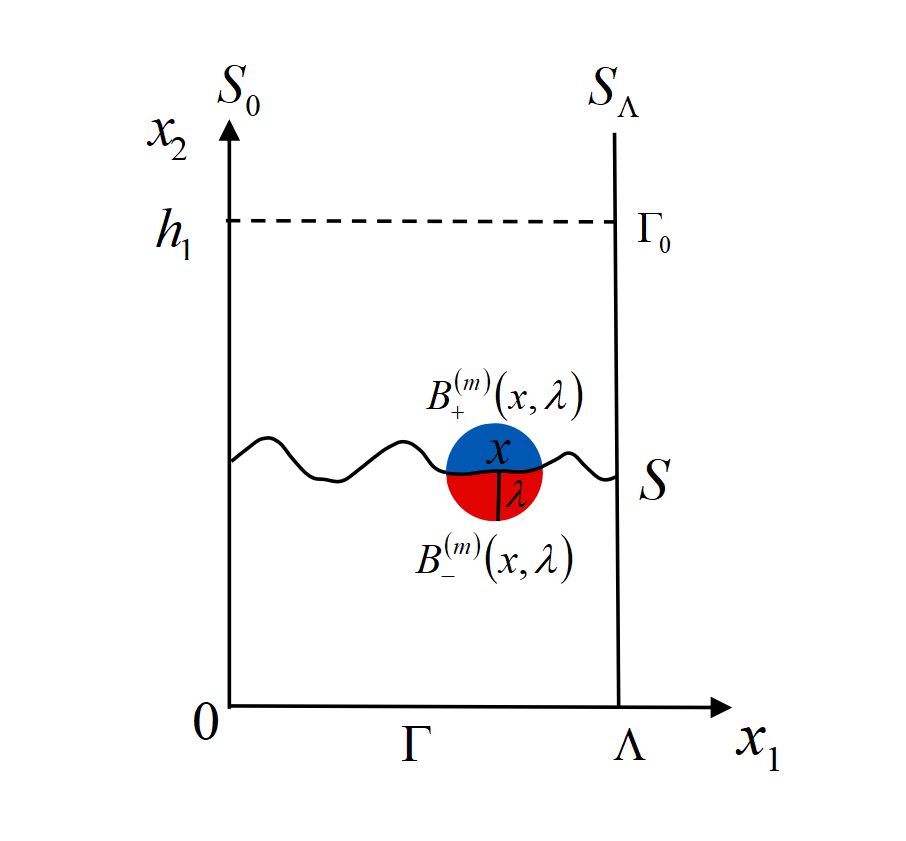}
    \caption{ }
    \label{fig3}
\end{figure}
the blue area depicts $B^{(m)}_+(x,\lambda)$, while the red area depicts $B^{(m)}_-(x,\lambda)$.

Since $S$ is bounded, the finite covering theorem implies that a finite number of circles with centers on $S$ can cover $S$, i.e., $m$ is a finite number.
Given that $\lambda$ is sufficiently small and $0<r<1$, there exists a domain $N_\lambda\cup N_{-\lambda}$ such that $(N_\lambda\cup N_{-\lambda})\subset(\Omega_1\cup\Omega_2)$. Furthermore, the region obtained by applying the above mappings to $N_\lambda\cup N_{-\lambda}$ is a subset of $B_+(0,r)$. With all mappings being invertible, and by Lemma \ref{z5}, we conclude that $u_\sigma\in H^2(N_\lambda\cup N_{-\lambda})$, where $u_\sigma$ is the solution to the boundary value problem \eqref{c1}.

Based on Lemma \ref{az} and \cite[Theorem 10.6]{https://doi.org/10.1002/cpa.3160170104}, and considering that all the previously introduced mappings are invertible, one has
\begin{align*}
\Vert u_\sigma\Vert_{H^2\left(N_{\frac{\lambda}{2}}\cup N_{-\frac{\lambda}{2}}\right)}\leqslant C\left(\Vert u_\sigma\Vert_{H^1\left(N_{\frac{\lambda}{2}}\cup N_{-\frac{\lambda}{2}}\right)}+\Vert u_\sigma\Vert_{L^2\left(N_{\frac{\lambda}{2}}\cup N_{-\frac{\lambda}{2}}\right)}\right).
\end{align*}
According to Lemma \ref{a8} and Lemma \ref{a9}, we have
\begin{align*}
\|u_\sigma\|_{H^1(\Omega)}\leqslant
C\left(\|\partial_\nu u_{\sigma}\|_{H^{-\frac{1}{2}}\left(S\right)}+\|u_{\sigma}\|_{H^{\frac{1}{2}}\left(S\right)}+\|g\|_{H^{-\frac{1}{2}}\left(\Gamma_0\right)}+\|u_\sigma\|_{H^{\frac{1}{2}}\left(\Gamma_0\right)}\right).
\end{align*}
Hence, according to the trace theorem, we obtain
\begin{align}\label{z6}
\begin{split}
\|u_\sigma\|_{H^2\left(N_{\frac{\lambda}{2}}\cup N_{-\frac{\lambda}{2}}\right)}+\|u_\sigma\|_{H^1(\Omega)}\leqslant &C\left(\|u_\sigma\|_{H^1\left(N_{\frac{\lambda}{2}}\cup N_{-\frac{\lambda}{2}}\right)}+\Vert u_\sigma\Vert_{L^2\left(N_{\frac{\lambda}{2}}\cup N_{-\frac{\lambda}{2}}\right)}+\|\partial_\nu u_{\sigma}\|_{H^{-\frac{1}{2}}\left(S\right)}\right.\\&\left.+\|u_{\sigma}\|_{H^{\frac{1}{2}}\left(S\right)}+\|g\|_{H^{-\frac{1}{2}}\left(\Gamma_0\right)}+\|u_\sigma\|_{H^{\frac{1}{2}}\left(\Gamma_0\right)}\right)\\
\leqslant&C\left(\|u_\sigma\|_{H^1\left(N_{\frac{\lambda}{2}}\cup N_{-\frac{\lambda}{2}}\right)}+\Vert u_\sigma\Vert_{L^2\left(N_{\frac{\lambda}{2}}\cup N_{-\frac{\lambda}{2}}\right)}+\|\partial_\nu u_{\sigma}\|_{H^{-\frac{1}{2}}\left(S\right)}\right.\\&\left.+\|g\|_{H^{-\frac{1}{2}}\left(\Gamma_0\right)}+\|u_\sigma\|_{H^{\frac{1}{2}}\left(\Gamma_0\right)}\right).
\end{split}
\end{align}
For $\phi\in H^2\left(N_{\frac{\lambda}{2}}\cup N_{-\frac{\lambda}{2}}\right)$ and $\forall\delta>0$, we have
\begin{align}\label{a10}
\|\phi\|_{H^1\left(N_{\frac{\lambda}{2}}\cup N_{-\frac{\lambda}{2}}\right)}\leqslant\delta\|\phi\|_{H^2\left(N_{\frac{\lambda}{2}}\cup N_{-\frac{\lambda}{2}}\right)}+C_\delta\|\phi\|_{L^2\left(N_{\frac{\lambda}{2}}\cup N_{-\frac{\lambda}{2}}\right)}.
\end{align}
For \eqref{z6}, utilizing \eqref{a10} and taking $\delta=\dfrac{1}{2C}$, it follows that
\begin{align*}
\|u_\sigma\|_{H^2\left(N_{\frac{\lambda}{2}}\cup N_{-\frac{\lambda}{2}}\right)}+\|u_\sigma\|_{H^1(\Omega)}\leqslant &\dfrac{1}{2}\Vert u_\sigma\Vert_{H^2\left(N_{\frac{\lambda}{2}}\cup N_{-\frac{\lambda}{2}}\right)}+C\left(\Vert u_\sigma\Vert_{L^2\left(N_{\frac{\lambda}{2}}\cup N_{-\frac{\lambda}{2}}\right)}\right.\\&\left.+\Vert\partial_\nu u_\sigma\Vert_{H^{-\frac{1}{2}}(S)}+\|g\|_{H^{-\frac{1}{2}}(\Gamma_0)}+\|u_\sigma\|_{H^{\frac{1}{2}}(\Gamma_0)}\right).
\end{align*}
This means that
\begin{align}\label{a11}
\begin{split}
\|u_\sigma\|_{H^2\left(N_{\frac{\lambda}{2}}\cup N_{-\frac{\lambda}{2}}\right)}+\|u_\sigma\|_{H^1(\Omega)}\leqslant&C\left(\Vert u_\sigma\Vert_{L^2\left(N_{\frac{\lambda}{2}}\cup N_{-\frac{\lambda}{2}}\right)}+\Vert\partial_\nu u_\sigma\Vert_{H^{-\frac{1}{2}}(S)}\right.\\&\left.+\|g\|_{H^{-\frac{1}{2}}(\Gamma_0)}+\|u_\sigma\|_{H^{\frac{1}{2}}(\Gamma_0)}\right).
\end{split}
\end{align}
We claim that
\begin{align}\label{a13}
\Vert\partial_\nu u_\sigma\Vert_{H^{-\frac{1}{2}}(S)}+\|u_\sigma\|_{H^{\frac{1}{2}}(\Gamma_0)}+\|u_\sigma\|_{L^2\left(N_{\frac{\lambda}{2}}\cup N_{-\frac{\lambda}{2}}\right)}\leqslant C\|g\|_{H^{-\frac{1}{2}}(\Gamma_0)}.
\end{align}
Once it is proved that \eqref{a13} holds, then
 \[\|u_\sigma\|_{H^1(\Omega)}\leqslant C\|g\|_{H^{-\frac{1}{2}}(\Gamma_0)}.\]

 We prove \eqref{a13} by contradiction. If \eqref{a13} is not true, then by Lemma \ref{a12},
we can suppose that there exists a sequence $\{\sigma_n\}, \sigma_n\rightarrow 0$ as $n\rightarrow\infty$, such that
\begin{align}\label{a21}
\|\partial_\nu u_{\sigma_n}\|_{H^{-\frac{1}{2}}(S)}+\|u_{\sigma_n}\|_{H^{\frac{1}{2}}(\Gamma_0)}+\Vert u_{\sigma_n}\Vert_{L^2\left(\Omega\right)}=1,\quad\|g_n\|_{H^{-\frac{1}{2}}(\Gamma_0)}<\frac{1}{n}.
\end{align}
According to \eqref{a11} and \eqref{a21}, the sequence $\{u_{\sigma_n}\}$ is bounded in $H^1_{qp}(\Omega)$, and $\{u_{\sigma_n}\}$, up to a subsequence, converges weakly to $u_0$ in $H_{qp}^1(\Omega)$. According to the Sobolev embedding theorem, $\{u_{\sigma_n}\}$ converges strongly to $u_0$ in $L^2_{qp}(\Omega)$. Therefore, $u_0\in H_{qp}^1(\Omega)$ and satisfies
\begin{align}\label{z7}
\begin{cases}
\nabla \cdot\left(\varepsilon_0^{-1} \nabla u_0\right)+\omega^2 \mu u_0=0 & \text { in } \Omega,\\
\partial_{x_2} u_0=Tu_0 & \text { on } \Gamma_0,\\
\partial_{x_2}u_0=0 & \text { on } \Gamma.
\end{cases}
\end{align}

We assert that this system has a unique solution.
Multiplying the equation in \eqref{z7} by the complex conjugate of a test function $v\in H^1_{qp}(\Omega)$, integrating over $\Omega$, and applying integration by parts, we obtain the variational problem: find $u\in H^1_{qp}(\Omega)$ such that
\begin{align*}
a(u,v)=0,\quad\forall v\in H^1_{qp}(\Omega),
\end{align*}
where the sesquilinear form
\begin{align*}
a(u,v)=\langle\varepsilon_0^{-1}\nabla u,\nabla v\rangle_{\Omega}-\langle\omega^2\mu u,v\rangle_{\Omega}-\langle\varepsilon_1^{-1}Tu,v\rangle_{\Gamma_0}\triangleq b(u,v)+c(u,v),
\end{align*}
where
\begin{align*}
b(u,v)=\langle\varepsilon_0^{-1}\nabla u,\nabla v\rangle_{\Omega}-\langle\varepsilon_1^{-1}Tu,v\rangle_{\Gamma_0},\quad c(u,v)=-\langle\omega^2\mu u,v\rangle_{\Omega}.
\end{align*}
Considering the real part of $b(u_0,\mathbb{T}u_0)$, one has
\begin{align*}
\operatorname{Re}b(u_0,\mathbb{T}u_0)=&\langle\varepsilon_1^{-1}\nabla u_0,\nabla u_0\rangle_{\Omega_1}-\langle\varepsilon_2^{-1}\nabla u_0,\nabla u_0\rangle_{\Omega_2}+2\operatorname{Re}\langle\varepsilon_2^{-1}\nabla u_0,\nabla\mathcal{R}(u_0|_S)\rangle_{\Omega_2}\\
&-\operatorname{Re}\langle\varepsilon_1^{-1}Tu_0,u_0\rangle_{\Gamma_0}\\
\geqslant&\langle\varepsilon_1^{-1}\nabla u_0,\nabla u_0\rangle_{\Omega_1}-\langle\varepsilon_2^{-1}\nabla u_0,\nabla u_0\rangle_{\Omega_2}+2\operatorname{Re}\langle\varepsilon_2^{-1}\nabla u_0,\nabla\mathcal{R}(u_0|_S)\rangle_{\Omega_2}\\
\geqslant&\langle\varepsilon_1^{-1}\nabla u_0,\nabla u_0\rangle_{\Omega_1}-\langle\varepsilon_2^{-1}\nabla u_0,\nabla u_0\rangle_{\Omega_2}-2|\operatorname{Re}\langle\varepsilon_2^{-1}\nabla u_0,\nabla\mathcal{R}(u_0|_S)\rangle_{\Omega_2}|.
\end{align*}
According to  Young's inequality, we obtain
\begin{align*}
|\operatorname{Re}\langle\varepsilon_2^{-1}\nabla u_0,\nabla\mathcal{R}(u_0|_S)\rangle_{\Omega_2}|\leqslant&|\varepsilon_2^{-1}||\langle\nabla u_0,\nabla\mathcal{R}(u_0|_S)\rangle_{\Omega_2}|\\
\leqslant&|\varepsilon_2^{-1}|\left(\dfrac{\epsilon}{2}\langle\nabla u_0,\nabla u_0\rangle_{\Omega_2}+\dfrac{1}{2\epsilon}\langle\nabla\mathcal{R}(u_0|_S),\nabla\mathcal{R}(u_0|_S)\rangle_{\Omega_2}\right),
\end{align*}
where $\epsilon$ is an arbitrary positive constant.
Thus,
\begin{align*}
\operatorname{Re}b(u_0,\mathbb{T}u_0)\geqslant&\langle\varepsilon_1^{-1}\nabla u_0,\nabla u_0\rangle_{\Omega_1}-\langle\varepsilon_2^{-1}\nabla u_0,\nabla u_0\rangle_{\Omega_2}-|\varepsilon_2^{-1}|\epsilon\langle\nabla u_0,\nabla u_0\rangle_{\Omega_2}\\
&-|\varepsilon_2^{-1}|\dfrac{1}{\epsilon}K_0\langle\varepsilon_1^{-1}\nabla u_0,\nabla u_0\rangle_{\Omega_1}\\
=&\langle\varepsilon_1^{-1}\nabla u_0,\nabla u_0\rangle_{\Omega_1}\left(1-\dfrac{|\varepsilon_2^{-1}|}{\epsilon}K_0\right)+|\varepsilon_2^{-1}|\langle\nabla u_0,\nabla u_0\rangle_{\Omega_2}(1-\epsilon),
\end{align*}
where $K_0=\dfrac{\langle\nabla\mathcal{R}(u_0|_S),\nabla\mathcal{R}(u_0|_S)\rangle_{\Omega_2}}{\langle\varepsilon_1^{-1}\nabla u_0,\nabla u_0\rangle_{\Omega_1}}$.

Since $\mathop{{\rm sup}}\limits_{\sigma>0}\dfrac{|\varepsilon_2|+\sigma/\omega}{\varepsilon_2^2+(\sigma/\omega)^2}K<1$, we can conclude that $|\varepsilon_2^{-1}|K_0<1$. In view of the arbitrariness of $\epsilon$ and the Fredholm alternative theorem, the boundary value problem \eqref{z7} admits the unique solution $u_0=0$, after removing a discrete set. This contradicts \eqref{a21}, thereby proving \eqref{a13}.

According to \eqref{a11} and \eqref{a13}, we have
\begin{align*}
\|u_\sigma\|_{H^1(\Omega)}\leqslant C\|g\|_{H^{-\frac{1}{2}}(\Gamma_0)}.
\end{align*}
Therefore, for any sequence $\{\sigma_n\}$ with $\sigma_n\rightarrow 0$ as $n\rightarrow\infty$, there exists a subsequence $\{\sigma_{n_k}\}$ such that $\{u_{\sigma_{n_k}}\}$ converges weakly to $u_0$ in $H^1_{qp}(\Omega)$ and strongly to $u_0$ in $L^2_{qp}(\Omega)$, where $u_0\in H^1_{qp}(\Omega)$ and satisfies \eqref{c2}. Due to the uniqueness of the limit, we can conclude that $u_\sigma$ converges weakly to $u_0$ in $H^1_{qp}(\Omega)$ and strongly to $u_0$ in $L^2_{qp}(\Omega)$ as $\sigma\rightarrow 0$. Moreover, we have
\begin{align*}
\|u_0\|_{H^1(\Omega)}\leqslant C\|g\|_{H^{-\frac{1}{2}}(\Gamma_0)}.
\end{align*}
\end{proof}
\begin{rema}
When $S$ is a smooth line segment, i.e., $S:=\{x\in\mathbb{R}^2:\:0<x_1<\Lambda,\:x_2=h_2\}$, where $h_2>0$ is a constant, a conclusion analogous to Theorem \ref{TM} can be derived. The proof follows a similar process and will not be further elaborated here.
\end{rema}

\section*{Acknowledgment}
The  authors  expresses
deep gratitude to Prof. Peijun Li  for very valuable discussions.
%--------------------------------------------------------------------------------------------------------------------------
%\bibliographystyle{abbrv}
%%%%\bibliographystyle{elsarticle-num}
%%%%\bibliographystyle{UNRT}
%%%%\bibliographystyle{IEEEtran}
%%%%\bibliography{IEEEabrv,REFS}
%\bibliography{LAPbib}

\begin{thebibliography}{10}

\bibitem{https://doi.org/10.1002/cpa.3160170104}
S.~Agmon, A.~Douglis, and L.~Nirenberg.
\newblock Estimates near the boundary for solutions of elliptic partial
  differential equations satisfying general boundary conditions {II}.
\newblock {\em Comm. Pure Appl. Math.}, 17(1):35--92, 1964.

\bibitem{ammari2013spectral}
H.~Ammari, G.~Ciraolo, H.~Kang, H.~Lee, and G.~W. Milton.
\newblock Spectral theory of a {N}eumann-{P}oincar\'{e}-type operator and
  analysis of cloaking due to anomalous localized resonance.
\newblock {\em Arch. Ration. Mech. Anal.}, 208(2):667--692, 2013.

\bibitem{MR3458160}
H.~Ammari, Y.~J. Deng, and P.~Millien.
\newblock Surface plasmon resonance of nanoparticles and applications in
  imaging.
\newblock {\em Arch. Ration. Mech. Anal.}, 220(1):109--153, 2016.

\bibitem{aylo2010wave}
R.~Aylo.
\newblock {\em \rm{Wave propagation in negative index materials}}.
\newblock PhD thesis, University of Dayton, 2010.

\bibitem{MR2445138}
P.~P. Banerjee and G.~Nehmetallah.
\newblock Linear and nonlinear propagation in negative index materials.
\newblock {\em J. Opt. Soc. Amer. B Opt. Phys.}, 23(11):2348--2355, 2006.

\bibitem{bao2022maxwell}
G.~Bao and P.~Li.
\newblock {\em Maxwell's equations in periodic structures}, volume 208 of {\em
  Applied Mathematical Sciences}.
\newblock Springer, Singapore; Science Press Beijing, Beijing,  2022.

\bibitem{WOS:001159239300001}
M.~C. Berrington, M.~J. Sellars, J.~J. Longdell, H.~M. Ronnow, H.~H.
  Vallabhapurapu, C.~Adambukulam, A.~Laucht, and R.~L. Ahlefeldt.
\newblock Negative refractive index in dielectric crystals containing
  stoichiometric rare-earth ions.
\newblock {\em Adv. Opt. Mater.}, 11(24), 2023.

\bibitem{MR3534861}
A.-S. Bonnet-Ben~Dhia, C.~Carvalho, L.~Chesnel, and P.~Ciarlet, Jr.
\newblock On the use of perfectly matched layers at corners for scattering
  problems with sign-changing coefficients.
\newblock {\em J. Comput. Phys.}, 322:224--247, 2016.

\bibitem{dhia2012t}
A.-S. Bonnet-Ben~Dhia, L.~Chesnel, and P.~Ciarlet, Jr.
\newblock {$T$}-coercivity for scalar interface problems between dielectrics
  and metamaterials.
\newblock {\em ESAIM Math. Model. Numer. Anal.}, 46(6):1363--1387, 2012.

\bibitem{MR3062923}
A.-S. Bonnet-Ben~Dhia, L.~Chesnel, and X.~Claeys.
\newblock Radiation condition for a non-smooth interface between a dielectric
  and a metamaterial.
\newblock {\em Math. Models Methods Appl. Sci.}, 23(9):1629--1662, 2013.

\bibitem{MR2446403}
A.-S. Bonnet-Ben~Dhia, P.~Ciarlet, Jr., and C.~M. Zw\"{o}lf.
\newblock A new compactness result for electromagnetic waves. {A}pplication to
  the transmission problem between dielectrics and metamaterials.
\newblock {\em Math. Models Methods Appl. Sci.}, 18(9):1605--1631, 2008.

\bibitem{WOS:000278570900038}
A.-S. Bonnet-Ben~Dhia, P.~Ciarlet, Jr., and C.~M. Zw\"{o}lf.
\newblock Time harmonic wave diffraction problems in materials with
  sign-shifting coefficients.
\newblock {\em J. Comput. Appl. Math.}, 234(6):1912--1919, 2010.

\bibitem{10.1016/j.cam.2006.01.046}
A.-S. Bonnet-Ben~Dhia, P.~Ciarlet~Jr., and C.~M. Zw\"{o}lf.
\newblock Two-and three-field formulations for wave transmission between media
  with opposite sign dielectric constants.
\newblock {\em J. Comput. Appl. Math.}, 204(2):408--417, 2007.

\bibitem{MR4277850}
R.~Bunoiu, K.~Ramdani, and C.~Timofte.
\newblock {$T$}-coercivity for the asymptotic analysis of scalar problems with
  sign-changing coefficients in thin periodic domains.
\newblock {\em Electron. J. Differential Equations}, Paper No. 59, 22,
  2021.

\bibitem{MR2986407}
F.~John.
\newblock {\em Partial differential equations}.
\newblock Springer, New York, fourth edition, 2019.

\bibitem{MR782799}
M.~Costabel and E.~Stephan.
\newblock A direct boundary integral equation method for transmission problems.
\newblock {\em J. Math. Anal. Appl.}, 106(2):367--413, 1985.

\bibitem{WOS:000796891400003}
M.-H. Ding, H.~Liu, and G.-H. Zheng.
\newblock Shape reconstructions by using plasmon resonances.
\newblock {\em ESAIM Math. Model. Numer. Anal.}, 56(2):705--726, 2022.

\bibitem{MR2597943}
L.~C. Evans.
\newblock {\em \rm{Partial differential equations}}, volume~19 of {\em Graduate
  Studies in Mathematics}.
\newblock American Mathematical Society, Providence, RI, second edition, 2010.

\bibitem{hu2024directinversetimeharmonicscattering}
G.~Hu and A.~Kirsch.
\newblock Direct and inverse time-harmonic scattering by dirichlet periodic
  curves with local perturbations.
  \newblock {\em arXiv} 2403. 07340,
   2024.
\bibitem{MR4128423}
H.~Li.
\newblock Recent progress on the mathematical study of anomalous localized
  resonance in elasticity.
\newblock {\em Electron. Res. Arch.}, 28(3):1257--1272, 2020.

\bibitem{MR1770682}
Y.~Y. Li and M.~Vogelius.
\newblock Gradient estimates for solutions to divergence form elliptic
  equations with discontinuous coefficients.
\newblock {\em Arch. Ration. Mech. Anal.}, 153(2):91--151, 2000.

\bibitem{WOS:001063223800029}
Y.~Nellambakam, K.~Haritha, and K.~V.~S. Shiv~Chaitanya.
\newblock Evanescent wave in multiple slit diffraction and n-array antennas in
  metamaterial using {C}es\`{a}ro convergence.
\newblock {\em Sci. Rep.}, 13(1), 2023.

\bibitem{MR3515306}
H.-M. Nguyen.
\newblock Limiting absorption principle and well-posedness for the {H}elmholtz
  equation with sign changing coefficients.
\newblock {\em J. Math. Pures Appl. (9)}, 106(2):342--374, 2016.

\bibitem{MR3871415}
H.-M. Nguyen.
\newblock Superlensing using complementary media and reflecting complementary
  media for electromagnetic waves.
\newblock {\em Adv. Nonlinear Anal.}, 7(4):449--467, 2018.

\bibitem{MR3990650}
H.-M. Nguyen.
\newblock Cloaking using complementary media for electromagnetic waves.
\newblock {\em ESAIM Control Optim. Calc. Var.}, 25:Paper No. 29, 19, 2019.

\bibitem{MR3947973}
H.-M. Nguyen.
\newblock Negative index materials: some mathematical perspectives.
\newblock {\em Acta Math. Vietnam.}, 44(2):325--349, 2019.

\bibitem{MR4405196}
H.-M. Nguyen.
\newblock Cloaking property of a plasmonic structure in doubly complementary
  media and three-sphere inequalities with partial data.
\newblock {\em SIAM J. Math. Anal.}, 54(2):2040--2096, 2022.

\bibitem{WOS:000970513800038}
A.~K. Pandey, G.~N. Pandey, N.~Kumar, B.~Suthar, and J.~P. Pandey.
\newblock Wave propagation properties of a one-dimensional photonic crystal
  with double negative metamaterial.
\newblock {\em Macromolecular Symposia}, 407(1), 2023.

\bibitem{PhysRevLett.85.3966}
J.~B. Pendry.
\newblock Negative refraction makes a perfect lens.
\newblock {\em Phys. Rev. Lett.}, 85:3966--3969, Oct 2000.

\bibitem{WOS:000083406900003}
J.~B. Pendry, A.~J. Holden, D.~J. Robbins, and W.~J. Stewart.
\newblock Magnetism from conductors and enhanced nonlinear phenomena.
\newblock {\em IEEE Trans. Microwave Theory Tech.}, 47(11):2075--2084, 1999.

\bibitem{pendry1996extremely}
J.~B. Pendry, A.~J. Holden, W.~J. Stewart, and I.~Youngs.
\newblock Extremely low frequency plasmons in metallic mesostructures.
\newblock {\em Phys. Rev. Lett.}, 76(25):4773, 1996.

\bibitem{WOS:001222725500002}
A.~Sharma, H.~Singh, A.~Gupta, and A.~J.~A. Al-Gburi.
\newblock Development and evaluation of wideband negative response in
  ultra-thin polygon metamaterial.
\newblock {\em Eur. Phys. J. B}, 97(5), 2024.

\bibitem{WOS:000167995200039}
R.~A. Shelby, D.~R. Smith, and S.~Schultz.
\newblock Experimental verification of a negative index of refraction.
\newblock {\em Science}, 292(5514):77--79, 2001.

\bibitem{Veselago1968The}
V.~G. Veselago.
\newblock The electrodynamics of substances with simultaneously negative values
  of $\epsilon$ and $\mu$.
\newblock {\em Soviet Phys. Uspekhi}, 10(4):509--514, 1968.

\bibitem{MR3062966}
S.~Wang.
\newblock {\em \rm{Introduction to Sobolev Spaces and Partial Differential
  Equations}}.
\newblock Science Press, 2009.

\bibitem{WOS:001180030400005}
K.~X. Zhang, W.~P. Wu, J.~D. Shao, J.~Sun, Q.~Yan, and J.~Y. Nie.
\newblock Substrate-thickness dependence of negative-index metamaterials at
  optical frequencies.
\newblock {\em Appl. Phys. Lett.}, 124(10), 2024.

\bibitem{MR73828}
Y.~B. Lopatinski\u{\i}.
\newblock On a method of reducing boundary problems for a system of
  differential equations of elliptic type to regular integral equations.
\newblock {\em Ukrain. Mat. \v{Z}.}, 5:123--151, 1953.

\bibitem{MR3200087}
A.-S. Bonnet-Ben~Dhia, L.~Chesnel, and P.~Ciarlet, Jr.
\newblock {$T$}-coercivity for the Maxwell problem with sign-changing
  coefficients.
\newblock {\em Comm. Partial Differential Equations}, 39(6):1007--1031, 2014.

\end{thebibliography}

\end{document}